\DeclareMathAlphabet{\mathpzc}{OT1}{pzc}{m}{it}
\numberwithin{equation}{section}
\newtheorem{Theorem}{Theorem}[section]
\newtheorem{Corollary}[Theorem]{Corollary}
\newtheorem{Lemma}[Theorem]{Lemma}
\newtheorem{Proposition}[Theorem]{Proposition}
 { \theoremstyle{definition}
\newtheorem{Definition}[Theorem]{Definition}
\newtheorem{Example}[Theorem]{Example}
\newtheorem{Remark}[Theorem]{Remark} }
\newcommand{\rFs}[5]{\,_{#1}F_{#2}\!\left( \genfrac{.}{.}{0pt}{}{#3}{#4}
\,;#5 \right)}
\newcommand{\De}{\Delta}
\newcommand{\la}{\lambda}
\newcommand{\Om}{\Omega}
\def\smath#1{\text{\scalebox{.8}{$#1$}}}
\def\sfrac#1#2{\smath{\frac{#1}{#2}}}
\DeclareMathOperator\arctanh{arctanh}
\begin{document}

\allowdisplaybreaks

\newcommand{\arXivNumber}{1812.08553}

\renewcommand{\PaperNumber}{053}

\FirstPageHeading

\ShortArticleName{Orthogonal Dualities of Markov Processes and Unitary Symmetries}

\ArticleName{Orthogonal Dualities of Markov Processes\\ and Unitary Symmetries}

\Author{Gioia CARINCI~$^\dag$, Chiara FRANCESCHINI~$^\ddag$, Cristian GIARDIN\`A~$^\S$, \\
Wolter GROENEVELT~$^\dag$ and Frank REDIG~$^\dag$}

\AuthorNameForHeading{G.~Carinci, C.~Franceschini, C.~Giardin\'a, W.~Groenevelt and F.~Redig}

\Address{$^{\dag}$~Technische Universiteit Delft, DIAM, P.O.~Box 5031, 2600 GA Delft, The Netherlands}
\EmailD{\href{mailto:G.Carinci@tudelft.nl}{G.Carinci@tudelft.nl}, \href{mailto:W.G.M.Groenevelt@tudelft.nl}{W.G.M.Groenevelt@tudelft.nl}, \href{mailto:F.H.J.Redig@tudelft.nl}{F.H.J.Redig@tudelft.nl}}

\Address{$^\ddag$~Center for Mathematical Analysis Geometry and Dynamical Systems, Instituto \\
\hphantom{$^\ddag$}~Superior T\'ecnico, Universidade de Lisboa, Av. Rovisco Pais, 1049-001 Lisboa, Portugal}
\EmailD{\href{mailto:Chiara.Franceschini@tecnico.ulisboa.pt}{Chiara.Franceschini@tecnico.ulisboa.pt}}

\Address{$^\S$~University of Modena and Reggio Emilia, FIM, via G.~Campi 213/b, 41125 Modena, Italy}
\EmailD{\href{mailto:Cristian.Giardina@unimore.it}{Cristian.Giardina@unimore.it}}

\ArticleDates{Received December 24, 2018, in final form July 05, 2019; Published online July 12, 2019}

\Abstract{We study self-duality for interacting particle systems,
where the particles move as continuous time random walkers
having either exclusion interaction or inclusion interaction.
We show that orthogonal self-dualities arise from unitary symmetries
of the Markov generator. For these symmetries we provide two equivalent
expressions that are related by the Baker--Campbell--Hausdorff formula.
The first expression is the exponential of an anti Hermitian operator
and thus is unitary by inspection; the second expression is factorized
into three terms and is proved to be unitary by using generating functions.
The factorized form is also obtained by using an independent
approach based on scalar products, which is a new method
of independent interest that we introduce to derive (bi)orthogonal
duality functions from non-orthogonal duality functions.}

\Keywords{stochastic duality; interacting particle systems; Lie algebras; orthogonal polynomials}

\Classification{60J25; 82C22; 22E60}

\section{Introduction}

In a series of previous works, dualities that are {\em orthogonal} in an appropriate Hilbert space have been derived for a class of interacting particle systems with Lie-algebraic structure. This class includes several well-known processes, for instance the generalized exclusion processes \cite{Liggett, SS94}, the inclusion process \cite{GRV}, as well as independent random walkers~\cite{DP06}. These orthogonal dualities were identified as classical orthogonal polynomials in~\cite{FG17} by using the structural properties of those polynomials (recurrence relation and raising/lowering operators). In \cite{RS182} the approach of generating functions was used instead, by which non-polynomial orthogonal dualities (provided by some other special functions, e.g., Bessel functions) were also found. Orthogonal duality functions can also be explained using representation theory: they can be understood as the intertwiner between two unitarily equivalent representations of a Lie algebra~\cite{FGG, G17}.

Often the duality property of a Markov process can be related to the existence of some (hidden) symmetries of the Markov generator, i.e., operators commuting with the generator of the Markov process \cite{GKR, GKRV}. This occurs for instance when the process has a reversible measure. In this context detailed balance can be interpreted as a trivial duality, and by acting with a~symmetry of the generator one obtains a non-trivial duality. A natural question that arises is thus what type of symmetries lead to orthogonal dualities. In this paper, for some particular processes, we show that those symmetries have to be unitary and we single out the general expression they must have.

We expect the association between orthogonal dualities and unitary symmetries to be robust and apply in great generality to all cases where the duality function is obtained from the action of a symmetry on the trivial duality. We choose here to focus on a class of processes having an underlying Lie algebra structure that helps in the explicit characterization of the unitary symmetry. We thus consider three interacting particle systems (exclusion, inclusion, independent walkers) for which some orthogonal self-duality function are known and are given by classical discrete orthogonal polynomials as the Meixner, Krawtchouk and Charlier polynomials. For this class of processes we provide a full characterization of their unitary symmetries. This result allows to identify the entire family of orthogonal duality functions, which turns out to be a two-parameter family. For special values of the parameters we recover the orthogonal polynomial duality. We expect similar results could be found for higher orthogonal polynomials that would be associated to other Markov processes and their dualities.

The organization of this paper is as follows. In Section~\ref{sec2} we give an overview of the main tools required to construct the setting. In Section~\ref{2.1} we recall the concept of (self-)duality between Markov processes and we introduce the notion of equivalence between (self-)duality functions. In Section~\ref{ips} we introduce three algebras ($\mathfrak{su}(2)$ algebra, $\mathfrak{su}(1,1)$ algebra and the Heisenberg algebra) and the associated Markov processes that turn out to be interacting particle systems.
In Section~\ref{sec3} we recall from \cite{GKRV} a general scheme to construct duality functions for Markov processes whose generator has an algebraic structure. In this approach there is a one-to-one correspondence between self-duality functions and symmetries of the Markov generator. In Section~\ref{sec3.22}, by using this connection between duality functions and symmetries we present the first main result of this paper. Namely, in Theorem~\ref{lemort} we provide the expression for the most general unitary symmetry that will then yield orthogonal duality functions. We also identify the special values of the parameters appearing in these symmetries for which the duality functions are orthogonal polynomials. The proof of Theorem~\ref{lemort} is contained in Section~\ref{proof}. In Section~\ref{sec4} we provide a second expression for these unitary symmetries: it is a factorized expression for function of the algebra generators that we show to be connected to the previous expression via the Baker, Campbell, Hausdorff formula. In Section~\ref{sec5} we introduce a novel independent procedure to obtain orthogonal duality functions. This new method relies on the use of a scalar product in a Hilbert space. In Section~\ref{sec5.1} we prove that the scalar product of two duality functions is again a new duality function and in Section~\ref{bio} we show that these new duality functions are biorthogonal by construction. We apply this technique in Section~\ref{sipkof}: for the interacting particle systems considered in this paper by manipulation of the biorthogonal relation we get an orthogonal relation.

The literature on stochastic duality for Markov processes is extremely vast. For the reader convenience we recall \cite{BorCorSa14, CST16, ImSa11, KMP, Ohku16,Spohn} for some applications to non-equilibrium statistical physics, \cite{CGGR, M} for duality in population models and \cite{borodin2016stochastic, CGSL, corwin2016stochastic} for the study of singular stochastic PDE via duality. We also mention the algebraic approach to duality that shows that several Markov processes dualities in turn derive from algebraic structures, see for instance \cite{CGRS2,CGRS1, GKRV, kuan2018algebraic,Sch97}.

The orthogonal dualities that were alluded to at the beginning of this introduction have been introduced more recently in the literature. One might wonder what the advantages are of having orthogonality. Assume the process has state space $\mathscr{S}$ and invariant measure $\mu$ and consider the process generator as an operator on $\mathsf{L}^{2}(\mathscr{S}, \mu)$. The duality function can be viewed as a family of functions in the configurations of the original process (labelled by the configurations of the dual process). Then, when this family happens to be linearly independent and complete so that it gives a basis, it is natural to ask if/when this can be turned into a family of duality functions that are orthogonal, thus yielding an orthogonal basis. It is not clear a priori that the natural orthogonalization Gram--Schmidt procedure conserves the duality property. Thus this has to be checked independently. In all cases, having an orthogonal basis will be helpful in studying the contraction properties of the Markov semigroup, and thus quantifying for instance the rate of relaxation to the invariant measure. Furthermore, in~\cite{ACR} orthogonal duality has been used to prove a Boltzmann--Gibbs principle where several simplifications occur as a consequence of the fact that the duality functions constitute an orthogonal basis for the Hilbert space.

\section{Preliminaries} \label{sec2}
We start by recalling the definition of stochastic duality for two processes and introducing the algebras and the interacting particle system (IPS) of interest. Our goal is to describe a~constructive technique, in which self-duality functions arise from both the symmetric approach of Section~\ref{sec3} as well as from the inner product approach described in Section~\ref{sec5}.

\subsection{Stochastic duality}\label{2.1}
The definition of stochastic duality can be formalized for Markov processes as well as their infinitesimal generators. Although they are not equivalent in general, they become equivalent under suitable hypothesis regarding the semigroup associated to the generator of the process discussed in Proposition~1.2 of~\cite{JK}.

\begin{Definition}[Markov duality definitions]\label{dop}Let $X=(X_{t})_{t\geq 0}$ and $Y=(Y_{t})_{t\geq 0}$ be two continuous time Markov processes with state spaces $\mathscr{S}$ and $\mathscr{S}^{\rm dual}$ and generators $L$ and $L^{\rm dual}$ respectively. We say that $Y$ is {\em dual} to $X$ with duality function $D\colon \mathscr{S}\times \mathscr{S}^{\rm dual}\longmapsto \mathbb{R} $ if
\begin{gather*}%\label{du}
\mathbb{E}_{x}[D(X_{t},y)]=\mathbb{E}_{y}[D(x,Y_{t})] ,
\end{gather*}
for all $(x,y) \in \mathscr{S} \times \mathscr{S}^{\rm dual}$ and $t\geq 0$. If $X$ and $Y$ are two independent copies of the same process, we say that $Y$ is {\em self-dual} with self-duality function~$D$. Duality can also be regarded at the level of the processes generators. We say that $L^{\rm dual}$ is {\em dual} to $L$ with duality function $D\colon \mathscr{S}\times \mathscr{S}^{\rm dual}\longmapsto \mathbb{R} $ if
\begin{gather*}%\label{duall}
[LD(\cdot,y)](x)=\big[L^{\rm dual}D(x,\cdot)\big](y) .
\end{gather*}
If $L=L^{\rm dual}$ we have self-duality.
\end{Definition}
Note that self-duality can always be thought as a special case of duality where the dual process is an independent copy of the first one. The simplification of self-duality for IPS typically arises from the fact that the computation of correlation functions of the original process reduces to studying a finite number of variables in the copy process.

\textbf{Countable state space.}
If the original process $(X_{t})_{t\geq 0}$ and the dual process $(Y_{t})_{t\geq 0}$ are Markov processes with countable state space $\mathscr{S}$ and $\mathscr{S}^{\rm dual}$ resp., then the duality relation is equivalent to
\begin{gather}\label{csspace}
\sum_{x' \in \mathscr{S}} L(x,x')D(x',y)=\sum_{y' \in \mathscr{S}} L^{\rm dual}(y,y')D(x,y')=\sum_{y' \in \mathscr{S}} \big(L^{\rm dual}\big)^{T}(y',y)D(x,y'),
\end{gather}
where $L^{T}$ denotes the transposition of the generator $L$. Generators are treated like (eventually infinite) matrices and
in matrix notation the identity~\eqref{csspace} becomes
\begin{gather}\label{dualitymatrix}
LD=D\big(L^{\rm dual}\big)^{T} .
\end{gather}
If $L^{\rm dual}=L$ we obtain the corresponding identities for self-duality. In this context, the genera\-tor~$L$ is given by a matrix known as \textit{rate matrix} such that
\begin{gather*}
L(x,y) \geq 0 \qquad \text{for} \quad x\neq y \qquad \text{and} \qquad \sum_{y} L(x,y) =0 .
\end{gather*}
We say that the process jumps from $x$ to $y$ with \textit{rate} $L(x,y)$.

\begin{Definition}[duality functions in product form and single site duality functions.]
The duality functions we will present turn out to be of the following product structure
\begin{gather*}
D({x},{y}) = \prod_{i } d(x_{i},y_{i})
\end{gather*}
for $(x,y) \in \mathscr{S}\times \mathscr{S}^{\rm dual} $. The function inside the product will be regarded as ``single site'' duality function and the subscript $i$ removed.
\end{Definition}
Throughout the paper we will work with duality functions of this structure and so we will only consider the single site.
\begin{Lemma}[notion of equivalence for duality functions.] \label{const} If $D(x,y)$ is a duality function between two processes and the function $c\colon \mathscr{S} \times \mathscr{S}^{\rm dual}\longrightarrow \mathbb{R}$ is constant under the dynamics of the two processes then $D_{c}(x,y)=c(x,y)D(x,y)$ is also a duality function. We will refer to $D$ and $D_{c}$ as equivalent duality functions.
\end{Lemma}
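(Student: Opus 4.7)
The plan is to verify the generator-level duality identity $[LD_c(\cdot,y)](x) = [L^{\rm dual} D_c(x,\cdot)](y)$ directly from the definitions. First I would pin down precisely what ``constant under the dynamics'' means. Since both processes live on countable state spaces with jump rates $L(x,x')$ and $L^{\rm dual}(y,y')$, the natural reading is: for every fixed $y$, the function $x \mapsto c(x,y)$ is constant along orbits of the $X$-chain, i.e.\ $c(x',y)=c(x,y)$ whenever $L(x,x')\neq 0$, and symmetrically $c(x,y')=c(x,y)$ whenever $L^{\rm dual}(y,y')\neq 0$.

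With this interpretation the computation is immediate. Expanding
\[
[LD_c(\cdot,y)](x)=\sum_{x'\in\mathscr{S}} L(x,x')\, c(x',y)\, D(x',y),
\]
for each $x'\neq x$ with $L(x,x')\neq 0$ the invariance gives $c(x',y)=c(x,y)$, while for $x'=x$ this is trivial, so the scalar $c(x,y)$ factors out:
\[
[LD_c(\cdot,y)](x)=c(x,y)\,[LD(\cdot,y)](x).
\]
An identical argument in the $y$ variable yields $[L^{\rm dual} D_c(x,\cdot)](y) = c(x,y)\,[L^{\rm dual} D(x,\cdot)](y)$. Applying the duality of $D$ to the right-hand side and canceling the common factor $c(x,y)$ then gives the desired identity for $D_c$.

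The only real subtlety is the interpretation of the hypothesis. One must rule out the weaker reading in which $c$ is merely harmonic for the two generators, i.e.\ $Lc(\cdot,y)\equiv 0$ and $L^{\rm dual} c(x,\cdot)\equiv 0$, since for a harmonic but non-orbit-constant $c$ the factorization step above breaks down. Once one reads the hypothesis as orbit-constancy (which, for irreducible chains on finite components, coincides with harmonicity), the statement is essentially a gauge-type transformation and the proof reduces to the short calculation above.
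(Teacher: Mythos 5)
Your argument is correct. The paper itself does not prove the lemma in this generality: it only verifies the representative special case $c(x,y)=b^{\sum_i x_i}c^{\sum_i y_i}$ for particle-conserving dynamics, and it does so at the level of the process definition, writing $\mathbb{E}_{x}[c(X_t,y)D(X_t,y)]=c(x,y)\,\mathbb{E}_{x}[D(X_t,y)]$ because $c(X_t,y)=c(x,y)$ almost surely, and likewise in the dual variable. You instead prove the general statement at the generator level, which is the infinitesimal version of the same mechanism: orbit-constancy lets the scalar $c(x,y)$ factor out of $\sum_{x'}L(x,x')c(x',y)D(x',y)$ because every nonzero off-diagonal term has $c(x',y)=c(x,y)$. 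Your explicit disambiguation of ``constant under the dynamics'' (orbit-constancy along allowed jumps, not mere harmonicity) is a genuine clarification that the paper leaves implicit, and it is exactly the hypothesis satisfied in the paper's examples, where $\sum_i x_i$ and $\sum_i y_i$ are conserved. One cosmetic remark: no cancellation of $c(x,y)$ is needed at the end (which would require $c\neq 0$); from $[LD_c(\cdot,y)](x)=c(x,y)[LD(\cdot,y)](x)$ and $[L^{\rm dual}D_c(x,\cdot)](y)=c(x,y)[L^{\rm dual}D(x,\cdot)](y)$ you simply chain the equalities through the duality of $D$. With that phrasing adjusted, the proof is complete and slightly more general than what the paper records.
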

For example, in the context of the processes we are interested in, we will see that the dynamics conserves the total number of particles and dual particles, i.e., $\sum_{i} x_{i}=\sum_{i} y_{i}$ is conserved. As a consequence of this we can always choose a self-duality function up to a multiplicative factor in terms of the total number of particles. For example, if
\begin{gather*}
D(x,y) = \prod_{i=1}^{n} d(x_{i},y_{i})
\end{gather*}
is a self-duality function, then for constants $c$ and $b$, the function
\begin{gather*}
D_{b,c}(x,y) = \prod_{i=1}^{n} b^{x_{i}}c^{y_{i}}d(x_{i},y_{i})
\end{gather*}
is again a self-duality. This can easily be checked using Definition~\ref{dop}. Indeed,
\begin{align*}
\mathbb{E}_{x} \left(D_{b,c}(X(t),y) \right) & =
\mathbb{E}_{x} \left( \prod_{i=1}^{n} b^{X_{i}(t)}c^{y_{i}}d(X_{i}(t),y_{i}) \right) =
b^{\sum_{i}x_{i}} c^{\sum_{i} y_{i}} \mathbb{E}_{x} \left( D(X(t),y ) \right) \\
 & =
b^{\sum_{i}x_{i}} c^{\sum_{i} y_{i}} \mathbb{E}_{x} \left( D(x,Y(t) ) \right) =
\mathbb{E}_{y} \left( \prod_{i=1}^{n} b^{x_{i}}c^{Y_{i}(t)}d(x_{i},Y_{i}(t)) \right) \\
& =
\mathbb{E}_{y} \left(D_{b,c}(x,Y(t)) \right) .
\end{align*}
Our examples are all such that $b=1$ and so we omit it.

\subsection{Algebras and IPS}\label{ips}
In the next three sections we introduce three algebras with three IPS, each one corresponding to one of the three algebras. In particular, the probability measure that define the $*$-structure of the algebra turns out to be the reversible measure of the particle process associated to that algebra. Here we denote by $\mathscr{F}(\mathscr{S})$ the space of real-valued functions on~$\mathscr{S}$, with countable~$\mathscr{S}$.

\subsubsection[The Lie algebra $\mathfrak{su}(1,1)$ and symmetric inclusion process, ${\rm SIP}(k)$]{The Lie algebra $\boldsymbol{\mathfrak{su}(1,1)}$ and symmetric inclusion process, $\boldsymbol{{\rm SIP}(k)}$}\label{liealgebrasu11}

Generators of the \textit{dual} Lie algebra $\mathfrak{su}(1,1)$ are $K^{0}$, $K^{+}$ and $K^{-}$. They satisfy
\begin{gather*}%\label{su11comrelation}
 \big[K^{0},K^{\pm}\big]=\mp K^{\pm} \qquad \text{and} \qquad \big[K^{+},K^{-}\big]=2K^{0} .
\end{gather*}
We shall work in a representation, labeled by $k\in\mathbb{R}_+$, where the actions of the three generators on functions $f$ in $\mathscr{F}(\mathbb{N}) $ is given by
\begin{gather}
( K^{+}f) (x):= (2k+x)f(x+1),\nonumber \\
( K^{-}f) (x):=xf(x-1), \label{discretesu11}\\
(K^{0}f) (x):=(x+k)f(x),\nonumber
\end{gather}
with $f(-1)=0$. We define an inner product on $\mathscr{F}(\mathbb N)$ by
\begin{gather} \label{normmeassip}
\langle f,g\rangle_{w_{p,k}}=\sum_{x} f(x)g(x) w_{p,k}(x) , \qquad w_{p,k}(x)= \frac{\Gamma(2k+x)}{x! \Gamma(2k)}p^{x}(1-p)^{2k} ,
\end{gather}
where $0<p<1$, then $\mathfrak{su}(1,1)$ acts on the corresponding Hilbert space $\mathsf{L}^{2}(w_{p,k})$ by unbounded operators with dense domain the set of finitely supported functions on $\mathbb N$. The adjoints of the generators with respect the inner product are given by
\begin{gather} \label{adjoints su11}
\big(K^{0}\big)^{ *}=K^{0}, \qquad (K^{+})^{ *}=\frac{1}{p}K^{-}, \qquad(K^{-})^{ *}=pK^{+} .
\end{gather}
The Casimir element is
\begin{gather*}
\Omega= 2 \big(K^{0}\big)^{2} -K^{+}K^{-}- K^{-}K^{+},
\end{gather*}
which is self-adjoint and commutes with every element of the Lie algebra.

The process associated with this algebra is the symmetric inclusion process ${\rm SIP}(2k)$, described below. The inclusion process is introduced first in \cite{GKR}, and also studied further in \cite{GKRV}. The ${\rm SIP}(2k)$ is a family of interacting particles processes labeled by the parameter $k> 0$ and that can be defined on a generic graph $G(V,E)$. The state space is unbounded so that each site can have an arbitrary number of particles. The ${\rm SIP}(2k)$ generator is
\begin{gather}\label{sipj}
L^{{\rm SIP}(2k)}= \sum_{\substack{1\le i < l \le |V| \\ (i,l)\in E}} L^{{\rm SIP}(2k)}_{i,l} , 		\\
 L^{{\rm SIP}(2k)}_{i,l} f({\bf x}) = x_{i}(2k+x_{l})\big[ f\big(\textbf{x}^{i,l}\big)-f(\textbf{x})\big]+x_{l}(2k+x_{i})\big[ f\big(\textbf{x}^{l,i}\big)-f(\textbf{x})\big] , \nonumber
\end{gather}
where ${\bf x}^{i,l}$ denotes the particle configuration obtained from the configuration ${\bf x}$ by moving one particle from site $i$ to site $l$, i.e., ${\bf x}^{i,l}={\bf x}-\delta_{i}+\delta_{l}$ and so the dynamic conserves the total number of particles. The generator can be defined on a weighted graph, however for the sake of simplicity we restrict here to~\eqref{sipj}, since the duality functions will not depend on the weights of the graph edges.

Clearly, the action of the generator involves only two connected sites and it can be produced with the representation~\eqref{discretesu11} acting on tensor products of $\mathscr{F}(\mathbb N)$ via the expression of the coproduct of the Casimir~$\Omega$. Recall that the coproduct is an algebra homomorphism denoted by~$\Delta$ and defined by
\begin{gather*}
\Delta(X)= X_1 + X_2 ,
\end{gather*}
for a Lie algebra element $X$. Here the subscript $i$ indicates that the operator acts in the $i$th factor of the tensor product. Higher-order coproducts are defined on Lie algebra elements $X$ by
\begin{gather*}
\Delta^n (X) = \sum_{i=1}^{n+1} X_i,
\end{gather*}
which we consider as an operator on $\mathscr{F}(\mathbb N)^{\otimes (n+1)}$. One can verify that for the couple of sites $(i,l)$ the generator of the ${\rm SIP}(2k)$ on two sites is written in terms of generators of the $\mathfrak{su}(1,1)$ Lie algebra as
\begin{gather*}
L^{{\rm SIP}(2k)}_{i,l} = K_{i}^{+}K_{l}^{-} + K_{i}^{-}K_{l}^{+}-2K_{i}^{0}K_{l}^{0}+2k^{2}= - \Delta (\Omega)_{i,l}+2 k^{2} .
\end{gather*}
This is an operator on $\mathscr{F}(\mathbb N)^{\otimes |V|}$, and the subscript $i,l$ indicates that $\Delta(\Omega)$ acts on the $i$th and $l$th factor of the tensor product. (Note that $\Delta(\Om)_{i,l} \neq \Om_i + \Om_l$). Since $\Omega$ commutes with every $X \in \mathfrak{su}(1,1)$ it follows that $L_{i,l}^{{\rm SIP}(2k)}$ commutes with $\De(X)_{i,l}=X_i+X_l$, and hence $L^{{\rm SIP}(2k)}= \sum L_{i,l}^{{\rm SIP}(2k)}$ commutes with $\De^{|V|-1}(X) = \sum X_i$.
Last, the reversible measure of the ${\rm SIP}(2k)$ process is given by the homogeneous product measure with marginals the Negative Binomial distributions with parameters $2k>0$ and $0 < p < 1$, i.e., with probability mass function $w_{p,k}$ of equation~\eqref{normmeassip}.

\subsubsection[The Lie algebra $\mathfrak{su}(2)$ and symmetric exclusion process, ${\rm SEP}(2j)$]{The Lie algebra $\boldsymbol{\mathfrak{su}(2)}$ and symmetric exclusion process, $\boldsymbol{{\rm SEP}(2j)}$}\label{liealgebrasu2}

Generators of the \textit{dual} $\mathfrak{su}(2)$ Lie algebra are $J^{0}$, $J^{+}$ and $J^{-}$ which satisfy the following commutation relations
\begin{gather*}%\label{commutationforsu2}
\big[J^{0},J^{\pm}\big]=\mp J^{\pm} \qquad \text{and} \qquad [J^{+},J^{-}]=-2J^{0} .
\end{gather*}
We work in a representation of $\mathfrak{su}(2)$ labeled by $j \in \mathbb N/2$ on functions $f$ in $\mathscr{F}(\left\lbrace 0,1, \ldots, 2j\right\rbrace )$ given by
\begin{gather}
( J^{+}f ) (x) := (2j-x)f(x+1),\nonumber \\
( J^{-}f ) (x) :=xf(x-1), \label{discretesu2}\\
\big(J^{0}f\big) (x) :=(x-j)f(x),\nonumber
\end{gather}
where $f(-1)=f(2j+1)=0$. Defining an inner product on $\mathscr{F}(\lbrace 0,1,\ldots,2j \rbrace)$ by
\begin{gather} \label{normmeassep}
\langle f,g \rangle_{w_{p,j}}=\sum_{x} f(x)g(x) w_{p,j}(x) , \qquad w_{p,j}(x)={2j \choose x} \left( \frac{p}{1-p} \right)^{x}(1-p)^{2j},
\end{gather}where $0<p<1$, the generators $J^0$, $J^+$ and $J^-$ act on the corresponding Hilbert space $\mathsf{L}^{2}(w_{p,j})$, with adjoints given by
\begin{gather*}
\big(J^{0}\big)^{ *}=J^{0},\qquad\ (J^{+})^{ *}=\frac{1-p}{p}J^{-}, \qquad (J^{-})^{ *}=\frac{p}{1-p}J^{+}.
\end{gather*}
The Casimir element is
\begin{gather*}
\Omega= 2 \big(J^{0}\big)^{2} +J^{+}J^{-}+ J^{-}J^{+},
\end{gather*}
which is self-adjoint and commutes with every element in the Lie algebra.

The process associated with this algebra is the exclusion process, defined below. For $j=1/2$ the boundary driven simple exclusion process has been studied using duality in~\cite{Spohn}. The model for arbitrary $j$ has been introduced and studied in~\cite{SS94}. The ${\rm SEP}(2j)$ is a family of interacting particles processes labeled by the parameter $j\in\mathbb{N}/{2}$ and that can be defined on the same graph $G(V,E)$, as before. Each site (vertex) of $G$ can have at most $2j$ particles and the ${\rm SEP}(2j)$ generator is
\begin{gather*}%\label{sepj}
L^{{\rm SEP}(2j)}= \sum_{\substack{1\le i < l \le | V | \\ (i,l)\in E}} L^{{\rm SEP}(2j)}_{i,l}, \\
 L^{{\rm SEP}(2j)}_{i,l} f({\bf x}) = x_{i}(2j-x_{l})\big[ f\big({\bf x}^{i,l}\big)-f({\bf x})\big]+(2j-x_{i})x_{l}\big[ f\big({\bf x}^{l,i}\big)-f({\bf x})\big] . \nonumber
\end{gather*}
As before we can write the generator of the ${\rm SEP}(2j)$ in two sites using the generators of the $\mathfrak{su}(2)$ algebra
\begin{gather*}
 L^{{\rm SEP}(2j)}_{i,l} = J_{i}^{+}J_{l}^{-} + J_{i}^{-}J_{l}^{+}+2J_{i}^{0}J_{l}^{0}-2 j^{2}= \Delta (\Omega)_{i,l}-2 j^{2} .
\end{gather*}
Last, the reversible measure of the ${\rm SEP}(2j)$ process is given by the homogeneous product measure with marginals the Binomial distribution with parameters $2j>0$ and $0<p<1$, i.e., with probability mass function $w_{p,j}$ of equation~\eqref{normmeassep}.

\subsubsection{The Heisenberg algebra and independent random walkers (IRW)}\label{heisenalgebra}
The dual Heisenberg algebra is the Lie algebra with generators $a$, $a^{\dagger}$ and $1$ such that
\begin{gather*}
 \big[a,a^{\dagger}\big]=-1 .
\end{gather*}
The Heisenberg algebra has a representation on $\mathscr{F}(\mathbb{N}) $ such that
\begin{gather}
 \big( a^{\dagger}f\big)(x)= f(x+1),\nonumber \\
 ( af )(x)=xf(x-1)\label{heisenbergdiscretegenerators}
\end{gather}
and $1$ acts as the identity, and where $f(-1)=0$.
Consider the inner product \begin{gather} \label{normmeasirw} \langle f,g\rangle_{w_{p}}=\sum_{x} f(x)g(x) w_{p}(x), \qquad w_{p}(x)= \frac{p^{x}}{x!}{\rm e}^{-p},
\end{gather}
where $p>0$, then the Heisenberg algebra acts on the corresponding Hilbert space $\mathsf L^2(w_p)$ by unbounded operators with dense domain the set of finitely support functions. The adjoints of $a$ and $a^\dagger$ with respect to the inner product are
\begin{gather*}
 a^{ *}=p a^{\dagger} \qquad \text{and} \qquad \big( a^{\dagger}\big)^{ *}=\frac{1}{p}a .
\end{gather*}
No such element as the Casimir is available for the Heisenberg algebra. The process associated with this algebra is the process of independent random walkers (IRW), which was first introduced in~\cite{spi} and is well-known. They are defined in the usual setting, the process consists of independent particles that perform a symmetric continuous time random walk at rate $1$ on the graph $G(V,E)$. The generator is given by
\begin{gather*}%\label{irwgen}
L^{\rm IRW}= \sum_{\substack{1\le i < l \le |V| \\ (i,l)\in E}} L^{\rm IRW}_{i,l}, 	\\
L^{\rm IRW}_{i,l} f(\textbf{x}) = x_{i}\big[ f\big(\textbf{x}^{i,l}\big)-f(\textbf{x})\big]+x_{l}\big[ f\big(\textbf{x}^{l,i}\big)-f(\textbf{x})\big]. \nonumber
\end{gather*}
In terms of the generators of the Heisenberg algebra we have
\begin{gather*}
L^{\rm IRW}_{i,l} = a_i^\dagger a_l + a_i a_l^\dagger - a_i a_l^\dagger - a_i a_l^\dagger.
\end{gather*}
One can verify that $L^{\rm IRW}_{i,l}$ commutes with $\De(X)_{i,l}$ for every $X$ in the Heisenberg algebra, so that $L^{\rm IRW}$ commutes with $\De^{|V|-1}(X)$.
The reversible invariant measure is provided by a homogeneous product of Poisson distributions with parameter $p>0$, i.e., with probability mass function $w_{p}$ of equation~\eqref{normmeasirw}.

\subsection[Self-dualities via symmetries: general approach and classical self-dualities]{Self-dualities via symmetries: general approach\\ and classical self-dualities} \label{sec3}

A general scheme for constructing self-dualities of continuous time Markov processes whose ge\-ne\-rator has a symmetry, i.e., an operator commuting with its generator, has been first proposed in~\cite{GKRV}. In this section we first recall this approach and then we illustrate it by showing the symmetry that is associated to classical self-duality functions. By construction, we are guaranteed that the functions we find via symmetries are self-dualities, but not orthogonal. However, orthogonality can be inferred by proving that the symmetry is unitary. A family of unitary symmetries will be found in Section~\ref{sec3.2} and, by specializing to some values of the parameters, we will recover orthogonal dualities in terms of discrete orthogonal polynomials previously found in \cite{FG17, RS182}. This orthogonality task is also addressed in Section~\ref{sec5} where we show that biortho\-go\-nality can be achieved by construction.

Recall that, since our processes are defined on a countable state space $\mathscr{S}$, we can work with the notion of duality in matrix notation, namely equation~\eqref{dualitymatrix}.
\begin{Definition}
Let $A$ and $B$ be two matrices having the same dimension. We say that $A$ is a~symmetry of $B$ if $A $ commutes with $B$, i.e.,
\begin{gather*}
[A,B]=AB-BA=0 .
\end{gather*}
\end{Definition}
The main idea is that self-duality (in the context of Markov process with countable state space) can be recovered starting from a \textit{trivial duality} which is based on the reversible measures of the processes. Then the action of a symmetry of the model on this trivial self-duality give rise into a non-trivial one.
The following results, whose proof can be found in~\cite{GKRV}, formalize this idea.
\begin{Theorem}[symmetries and self-duality]\label{symm+sd} Let $d$ be a self-duality function of the generator $L$ and let $S$ be a symmetry of $L$, then $D=Sd$ is again a self-duality function for $L$.
\end{Theorem}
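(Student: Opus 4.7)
The plan is to work entirely in the matrix notation developed earlier in the paper, so that duality becomes the identity $Ld = dL^{T}$ (equation~\eqref{dualitymatrix}) and the symmetry hypothesis becomes $SL = LS$. Because these are both plain matrix identities, the claim should reduce to a short chain of equalities using associativity of matrix multiplication, with no analysis required. This is why I expect the proof to be essentially a one-line computation.

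Concretely, I would first spell out the setup: interpret $d$ as a (possibly infinite) matrix indexed by $\mathscr{S} \times \mathscr{S}$ with entries $d(x,y)$, so that $Sd$ is the matrix whose entries are $(Sd)(x,y) = \sum_{x'} S(x,x')\, d(x',y)$, and record that the self-duality hypothesis and symmetry hypothesis read
\begin{gather*}
Ld = dL^{T}, \qquad LS = SL.
\end{gather*}
Then I would simply compute, for $D = Sd$,
\begin{gather*}
LD = L(Sd) = (LS)d = (SL)d = S(Ld) = S\big(dL^{T}\big) = (Sd)L^{T} = DL^{T},
\end{gather*}
which is exactly the matrix form of self-duality for $D$.

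The only genuine point to check is that the rearrangements above are legitimate, i.e., that associativity holds for the infinite matrix products that appear. Since each of the three matrices $L$, $S$, $d$ has only finitely many nonzero entries in every row (the generators are local and $d$ is assumed to be well-defined under the dynamics so that sums like $\sum_{x'} L(x,x') d(x',y)$ converge absolutely), the triple products $LSd$ and $Sd L^{T}$ can be computed as iterated sums that converge absolutely, and Fubini justifies reordering. Strictly speaking, the commutation $[L,S] = 0$ and the duality relation $Ld = dL^{T}$ are assumed on the appropriate domain (the class of functions on which everything is well-defined), and the claim $LD = DL^{T}$ is understood on that same domain. The ``main obstacle'' is therefore not conceptual at all; it is only to verify that we remain inside a function class on which all products make sense, and this is immediate in the countable-state-space setting used throughout Section~\ref{sec3}.
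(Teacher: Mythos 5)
Your proof is correct and is exactly the standard argument: the paper itself does not prove Theorem~\ref{symm+sd} but defers to~\cite{GKRV}, where the proof is precisely the chain $LD = LSd = SLd = SdL^{T} = DL^{T}$ that you write, together with the same caveats about domains and convergence of the infinite matrix products. No gap to report.
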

If there is a description of the process generator in terms of a Lie algebra, then symmetries can be constructed using this algebraic structure. The two main elements of Theorem~\ref{symm+sd} are the initial self-duality $d$ and the symmetry operator~$S$. In general, if the process has a reversible measure the self-duality $d$ can easily be found starting from the reversibility.

\begin{Lemma}[diagonal self-duality and reversibility]\label{cheaptrivialduality} If the process associated to generator $L$ has reversible measure $\mu$, then the diagonal self-duality functions are of the form
\begin{gather*}
d(x,y)=\frac{\delta_{x,y}}{\mu(x)},\qquad \text{where} \quad x,y \in \mathscr{S} .
\end{gather*}
\end{Lemma}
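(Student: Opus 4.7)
The plan is to verify the duality identity \eqref{dualitymatrix}, namely $Ld = d L^T$, directly by substituting the candidate function $d(x,y) = \delta_{x,y}/\mu(x)$ and then invoking the detailed balance condition associated with reversibility.

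First I would recall that reversibility of $L$ with respect to $\mu$ is equivalent to the detailed balance relation
\begin{gather*}
\mu(x)\,L(x,y) = \mu(y)\,L(y,x) \qquad \text{for all } x,y\in\mathscr S,
\end{gather*}
which expresses the self-adjointness of $L$ on $\mathsf L^2(\mathscr S,\mu)$.

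Next I would compute both sides of the matrix identity $Ld = dL^T$ separately. On one hand, since $d(x',y)$ is supported on $x'=y$,
\begin{gather*}
(Ld)(x,y) = \sum_{x'\in\mathscr S} L(x,x')\,\frac{\delta_{x',y}}{\mu(y)} = \frac{L(x,y)}{\mu(y)} .
\end{gather*}
On the other hand, using that $d(x,y')$ is supported on $y'=x$ and that $L^T(y',y)=L(y,y')$,
\begin{gather*}
(dL^T)(x,y) = \sum_{y'\in\mathscr S} \frac{\delta_{x,y'}}{\mu(x)}\,L(y,y') = \frac{L(y,x)}{\mu(x)} .
\end{gather*}
The detailed balance relation above then gives exactly $L(x,y)/\mu(y) = L(y,x)/\mu(x)$, so $Ld = dL^T$, proving that $d$ is a self-duality function.

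There is essentially no obstacle here: the statement is a direct transcription of detailed balance into the matrix duality relation \eqref{dualitymatrix}. The only minor point worth noting is that for infinite $\mathscr S$ one should observe that the sums collapse to a single term because of the Kronecker delta, so no convergence issue arises. If one wished to phrase the result in terms of operators rather than matrices, the same computation reads ``$L$ self-adjoint on $\mathsf L^2(\mu)$ $\Longleftrightarrow$ $M_{1/\mu}$ intertwines $L$ with $L^T$'', which is exactly the content of the lemma.
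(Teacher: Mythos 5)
Your verification is correct and is exactly the standard detailed-balance computation that the paper leaves implicit (the lemma is stated without proof, with the general framework attributed to \cite{GKRV}): substituting $d(x,y)=\delta_{x,y}/\mu(x)$ into $Ld=dL^{T}$ collapses both sums and reduces the identity to $\mu(x)L(x,y)=\mu(y)L(y,x)$. The only caveat is that you prove the ``if'' direction, i.e., that this diagonal function \emph{is} a self-duality; the lemma's phrasing ``the diagonal self-duality functions \emph{are of the form}'' also suggests the converse, which follows from the same computation (any $d(x,y)=c(x)\delta_{x,y}$ satisfying $Ld=dL^{T}$ forces $c(x)\mu(x)=c(y)\mu(y)$ whenever $L(x,y)\neq 0$) but requires irreducibility to conclude that $c(x)\mu(x)$ is a global constant — worth a sentence if you want the full characterization.
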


We refer to these diagonal self-duality functions as trivial or ``cheap'' self-duality functions. The next lemma summarizes the cheap self-dualities for our three processes: notice that, up to neglectable factors, they are the inverse of their reversible measure.

\begin{Lemma}[trivial self-duality functions]\label{chlemma}The processes of interests are self-dual with single site diagonal self-duality function given by
\begin{gather*}%\label{ch}
D^{\rm ch}_{p}(x,y)=
\begin{cases}
\dfrac{y! \Gamma(2k)}{\Gamma(2k+y)} p^{-y} \delta_{x,y} &\text{for the} \ {\rm SIP}(2k),	\vspace{1mm}\\
 \dfrac{(2j-y)!y!}{2j!}\left( \frac{1-p}{p}\right) ^{y}	\delta_{x,y} &\text{for the} \ {\rm SEP}(2j),\vspace{1mm}\\
 \dfrac{y!}{p^{y}} \delta_{x,y} &\text{for the} \ {\rm IRW}.
 \end{cases}
\end{gather*}
\end{Lemma}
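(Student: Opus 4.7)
The plan is to derive each of the three single-site self-dualities directly from Lemma~\ref{cheaptrivialduality}, combined with the explicit reversible product measures listed in Section~\ref{ips}, and then to invoke Lemma~\ref{const} to absorb the global, configuration-independent prefactors that are left over.

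The first step is to observe that for each of the three processes the reversible measure $\mu$ is a homogeneous product measure, so both $\mu(\mathbf{x})$ and $\delta_{\mathbf{x},\mathbf{y}}$ factor over the sites. Consequently, the cheap diagonal self-duality $\delta_{\mathbf{x},\mathbf{y}}/\mu(\mathbf{x})$ of Lemma~\ref{cheaptrivialduality} takes product form $\prod_i \delta_{x_i,y_i}/w(x_i)$, with single-site factor $\delta_{x,y}/w(x)$ where $w$ is the appropriate single-site marginal ($w_{p,k}$, $w_{p,j}$, or $w_p$). Thus the task reduces to computing $1/w(y)$ in each case.

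The second step is the direct substitution of the marginals from equations~\eqref{normmeassip}, \eqref{normmeassep} and~\eqref{normmeasirw}, which gives
\begin{gather*}
\frac{1}{w_{p,k}(y)} = \frac{y!\,\Gamma(2k)}{\Gamma(2k+y)}\,p^{-y}(1-p)^{-2k}, \\
\frac{1}{w_{p,j}(y)} = \frac{(2j-y)!\,y!}{(2j)!}\left(\frac{1-p}{p}\right)^{y}(1-p)^{-2j}, \\
\frac{1}{w_{p}(y)} = \frac{y!}{p^{y}}\,{\rm e}^{p}.
\end{gather*}
Each expression coincides with the formula in the statement up to a multiplicative constant, respectively $(1-p)^{2k}$, $(1-p)^{2j}$, and ${\rm e}^{-p}$. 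Being independent of $(x,y)$, these scalars are trivially invariant under the dynamics, so Lemma~\ref{const} certifies that multiplying through by them (at every site) produces an equivalent self-duality function of exactly the claimed form.

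There is no essential obstacle here: the computation is little more than writing down the reciprocal of each reversible marginal. The only conceptual point worth stressing is the factorization argument that reduces the full product-form cheap duality to its single-site factor, after which the normalization constants of the Negative Binomial, Binomial, and Poisson distributions can be stripped off via the equivalence of Lemma~\ref{const}.
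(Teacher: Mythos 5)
Your proposal is correct and follows exactly the route the paper intends: the lemma is presented as an immediate consequence of Lemma~\ref{cheaptrivialduality} together with the remark that the cheap dualities are, ``up to neglectable factors,'' the inverses of the reversible marginals $w_{p,k}$, $w_{p,j}$, $w_p$, with the leftover normalization constants $(1-p)^{2k}$, $(1-p)^{2j}$, ${\rm e}^{-p}$ absorbed via Lemma~\ref{const}. Your explicit site-factorization step and the three reciprocal computations are accurate fillings-in of what the paper leaves implicit.
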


We can now find several self-duality results applying the recipe of Theorem \ref{symm+sd} using the trivial self-duality function as starting point. We will give symmetries in terms of the exponential function of Lie algebra elements considered as operators on function spaces, see~\eqref{discretesu11}, \eqref{discretesu2} and~\eqref{heisenbergdiscretegenerators}. Recall that our representations are defined in terms raising/lowering operators (i.e., shift by $\pm 1$) and diagonal operators. These operators are defined on finitely supported functions, so that $n$th powers of the raising and lowering operators will always be $0$ for large $n$. Consequently, when considering exponential functions of raising and lowering operators acting on finitely supported functions we do not have to worry about convergence of series, but in other cases we have to check convergence. The exponential function of a diagonal operator is again a diagonal operator. Moreover, it will be enough to provide symmetries acting on functions on one site. Indeed, we have shown that the generator $L$ of the process, which acts on $N$ sites, commutes with $\De^{N-1}(X^n)$ for any $n\in \mathbb N$ and any Lie algebra element $X$. As a consequence $L$ also commutes with $\exp\big(\De^{N-1}(X)\big) = \exp(X_1)\exp(X_2) \cdots \exp(X_N)$.

The following lemma shows how to find the so-called classical self-duality functions which have a lower triangular structure.
\begin{Proposition}[classical self-duality functions and associated symmetries]\label{clalemm} The following results hold.
\begin{enumerate}\itemsep=0pt
\item[$1.$] The ${\rm SIP}(2k)$ is self-dual with single site self-duality function given by
\begin{gather} \label{sipcla}
D^{\rm cl}_{p}(x,y):=S\big( D^{\rm ch}_{p} ( \cdot, y ) \big) (x) = \frac{x!}{(x-y)!} \frac{\Gamma(2k)}{\Gamma(2k+y)} p^{-y} \mathbf{1}_{\lbrace y \leq x \rbrace },
\end{gather}
where $S={\rm e}^{K^{-}}$.

\item[$2.$] The ${\rm SEP}(2j)$ is self-dual with single site self-duality function given by
\begin{gather*}%\label{sepcla}
D^{\rm cl}_{p}(x,y):=S\big( D^{\rm ch}_{p} (\cdot, y) \big) (x) = \frac{x!}{(x-y)!} \frac{ (2j-y)!}{2j!} \left( \frac{1-p}{p} \right) ^{y} \mathbf{1}_{\lbrace y \leq x \rbrace },
\end{gather*}
where $ S={\rm e}^{J^{-}}$.

\item[$3.$] The ${\rm IRW}$ is self-dual with single site self-duality function given by
\begin{gather*}
D^{\rm cl}_{p}(x,y):=S\big( D^{\rm ch}_{p} (\cdot, y) \big) (x) = \frac{x!}{(x-y)!} \frac{1}{p^{y}}\mathbf{1}_{\lbrace y \leq x \rbrace } ,
\end{gather*}
where $ S={\rm e}^{a}$.
\end{enumerate}
\end{Proposition}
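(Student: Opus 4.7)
The plan is to invoke Theorem \ref{symm+sd} applied to the trivial self-duality $D^{\rm ch}_p$ of Lemma \ref{chlemma} with the three symmetry operators $S = e^{K^-}$, $S = e^{J^-}$, $S = e^{a}$. Accordingly the work splits, in each of the three cases, into two tasks: first verify that $S$ is a symmetry of the generator, and then compute $(S D^{\rm ch}_p(\cdot,y))(x)$ explicitly and compare with the claimed closed form.

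For the symmetry step I would argue uniformly for all three models. Section \ref{ips} already showed that the full generator $L$ commutes with $\Delta^{|V|-1}(X) = \sum_i X_i$ for every element $X$ of the relevant Lie algebra (SIP for $\mathfrak{su}(1,1)$, SEP for $\mathfrak{su}(2)$, IRW for the Heisenberg algebra). Since $X_i$ and $X_l$ act on different tensor factors they commute, hence $\exp(\Delta^{|V|-1}(X)) = \prod_i \exp(X_i)$, which inherits the commutation with $L$. Choosing $X = K^-$, $J^-$ or $a$ yields the desired symmetry, acting site by site. Convergence of the series is not an issue: the operators $K^-$, $J^-$, $a$ are lowering operators, and iterated application to any finitely supported function eventually annihilates it; in particular $D^{\rm ch}_p(\cdot,y)$ is supported at the single point $x = y$, so the exponential series truncates.

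For the explicit computation I would first observe the elementary identity $(K^-)^n f(x) = \frac{x!}{(x-n)!} f(x-n)$ for $n \le x$ (and $0$ otherwise), obtained by iterating the action in \eqref{discretesu11}; the same formula holds for $(J^-)^n$ from \eqref{discretesu2} and for $a^n$ from \eqref{heisenbergdiscretegenerators}. Summing with $1/n!$ factors gives
\begin{gather*}
\bigl(e^{K^-} f\bigr)(x) = \sum_{n=0}^{x} \binom{x}{n} f(x-n),
\end{gather*}
and analogously for the other two algebras. Applied to $f = D^{\rm ch}_p(\cdot,y)$, only the term $n = x-y$ survives and only when $y \le x$, producing $\binom{x}{y} D^{\rm ch}_p(y,y)$. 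For the SIP the $y!$ in the binomial denominator cancels the $y!$ prefactor of $D^{\rm ch}_p(y,y)$, leaving exactly the formula \eqref{sipcla} with the indicator $\mathbf{1}_{\{y \le x\}}$. The SEP and IRW are identical except that $D^{\rm ch}_p(y,y)$ is read off from Lemma~\ref{chlemma}.

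The main (mild) obstacle is the bookkeeping of constants: making sure the $y!$ coming from the binomial coefficient cancels exactly with the $y!$ built into $D^{\rm ch}_p(y,y)$, and in the SEP case tracking the factor $((1-p)/p)^y$ correctly. Once the algebraic framework of Section \ref{ips} and the nilpotency of lowering operators on finitely supported functions are used, the symmetry property and the absence of convergence issues are automatic, so the proof reduces to a one-line series summation in each case.
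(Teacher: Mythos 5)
Your proposal is correct and follows essentially the same route as the paper: invoke Theorem \ref{symm+sd} with the cheap duality of Lemma \ref{chlemma}, justify the symmetry via the commutation of $L$ with $\Delta^{|V|-1}(X)$ already established in Section \ref{ips}, and evaluate the truncating exponential series of the lowering operator on the delta function. The only cosmetic difference is that you first derive the general identity $\big({\rm e}^{K^-}f\big)(x)=\sum_{n=0}^{x}\binom{x}{n}f(x-n)$ and then specialize, whereas the paper substitutes $D^{\rm ch}_p(\cdot,y)$ directly into the series; the bookkeeping of the $y!$ cancellation works out exactly as you describe.
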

\begin{proof}We only consider the first item, the proof for the other two is similar. The fact that $D^{\rm cl}_{p}(x,y)$ is a self-duality function is an immediate consequence of Theorem \ref{symm+sd} since ${\rm e}^{K^-}$ commutes with the Casimir~$\Omega$. The second equality in~\eqref{sipcla} follows from a straightforward calculation. Indeed, acting with the symmetry $S$, we have
\begin{align*}
D^{\rm cl}(x,y) &={\rm e}^{K^{-}} \big( D^{\rm ch}_{p} (\cdot, y) \big) (x) = \sum_{i=0}^{\infty} \frac{(K^{-})^{i}}{i!}\frac{y!\Gamma(2k)}{\Gamma(2k+y)} \left( \frac{1}{p} \right) ^{y} \delta_{x,y} \\
& = \sum_{i=0}^{\infty} \frac{y!}{i!}\frac{\Gamma(2k)}{\Gamma(2k+y)} \frac{x!}{(x-i)!} \left( \frac{1}{p} \right) ^{y} \mathbf{1}_{\lbrace i \leq x \rbrace }\delta_{x-i,y} \\ & = \frac{x!}{(x-y)!} \frac{\Gamma(2k)}{\Gamma(2k+y)} \left( \frac{1}{p} \right) ^{y} \mathbf{1}_{\lbrace y \leq x \rbrace } .\tag*{\qed}
\end{align*}\renewcommand{\qed}{}
\end{proof}

By virtue of Lemma \ref{const} one can either neglect constants and factors that are constant under the dynamic of the process or, on the other hand, add convenient choice of these constant factors. In particular, in Section~\ref{sec5}, we will fix the value of these constants in a suitable way.

\section{Orthogonal self-dualities and unitary symmetries}\label{sec3.2}
In what follows, we will relate the orthogonal polynomials with their hypergeometric functions. In general, the hypergeometric functions $\mathstrut_r F_s$ is defined as an infinite series
\begin{gather*} \rFs{r}{s}{a_{1},\ldots,a_{r}}{b_{1},\ldots,b_{s}}{x}= \sum_{k=0}^{\infty}\frac{(a_{1})_{k}\cdots(a_{r})_{k}}{(b_{1})_{k}\cdots(b_{s})_{k}}\frac{x^{k}}{k!},
\end{gather*}
where $( a )_{k}$ denotes the Pochhammer symbol defined in terms of the Gamma function as
\begin{gather*}
( a)_{k}:=\frac{\Gamma(a+k)}{\Gamma(a)} .
\end{gather*}
Whenever one of the numerator parameters is a negative integer, the hypergeometric function~$\mathstrut_r F_s $ turns into a finite sum, so it is a polynomial in the other numerator parameters. We define polynomials as in~\cite{KLS}, in particular the following three discrete polynomials: Meixner polynomials
\begin{gather*}
M(x,y;p)= \rFs{2}{1}{-x, -y}{2k }{1-\frac{1}{p}} \qquad \text{for} \quad x,y \in \mathbb{N},
\end{gather*}
Krawtchouk polynomials
\begin{gather*}
K(x,y;p)= \rFs{2}{1}{-x, -y}{-2j }{\frac{1}{p}} \qquad \text{for} \quad x,y = 0,1,\ldots, 2j,
\end{gather*}
and the Charlier polynomials
\begin{gather*}
C(x,y; p)= \rFs{2}{0}{-x, -y}{-}{-\frac{1}{p}} \qquad \text{for} \quad x,y \in \mathbb{N}.
\end{gather*}

\subsection{Main result}\label{sec3.22}
In this section we explicitly determine the symmetries $S$, given in terms of the underlying Lie algebra generators, which allow to retrieve the orthogonal polynomials. It is important to mention that, since we start from a (trivial) self-duality which is orthogonal with respect to the measure~$w$, the operator $S$ that produces the orthogonal self-duality must be unitary. Recall that a unitary operator in $\mathsf{L}^{2}(\mathscr{S}, w)$ is a linear operator such that
\begin{gather*}
UU^{\ast}=U^{\ast}U=I ,
\end{gather*}
where $U^{\ast}$ is the adjoint of $U$ in $\mathsf{L}^{2}(\mathscr{S}, w)$. As a consequence of this, we will have that $U$ preserves the inner product of the Hilbert space $\mathsf{L}^{2}(\mathscr{S}, w)$ and so the norm of the cheap self-duality function~$D^{\rm ch}$ must be the same of the norm of the orthogonal self-duality function $D^{\rm or}=SD^{\rm ch}$
\begin{gather*}
\big\| D^{\rm or} \big\|_{w}^{2} = \langle S D^{\rm ch},S D^{\rm ch} \rangle_{w}=\big\| D^{\rm ch} \big\|_{w}^{2} .
\end{gather*}
In the spirit of Proposition~\ref{clalemm} we list the new orthogonal symmetries for the interacting particles systems.
\begin{Theorem}[orthogonal self-duality functions and associated symmetries]\label{lemort} The following results hold.
\begin{enumerate}\itemsep=0pt
\item[$1.$] For the ${\rm SIP}(2k)$ we have that
\begin{enumerate}\itemsep=0pt
\item[$i)$] The symmetry
\begin{gather}\label{tiledSsip}
S_{\alpha, \beta}= \exp \left( \beta \left(-K^{+} + \frac{1}{p} K^{-} \right) \right) \exp\big({\rm i} \alpha K^{0} \big)
\end{gather}extends to a unitary operator for every choice of $ \alpha, \beta \in \mathbb{R}$.
As a consequence the functions $S_{\alpha, \beta} \big( D^{\rm ch}_{p} ( x, \cdot) \big) (y) $ are orthogonal $($single site$)$ self-duality functions in $\mathsf{L}^{2}(w_{p,k})$ with squared norm $\big\| D^{\rm ch}_{p} \big\|^{2}_{w_{p,k}}$.
\item[$ii)$] Choosing $\alpha=\hat{\alpha} = \pi$ and $\beta=\hat{\beta} = \sqrt{p} \arctanh \big( \sqrt{p} \big) $ we get the Meixner polynomials up to a constant: $D^{\rm or}_{p}(x,y):= S_{\hat{\alpha}, \hat{\beta} } \big( D^{\rm ch}_{p} (x,\cdot) \big) (y) =(p-1)^{k} M(x,y;p) $.
\end{enumerate}

\item[$2.$] For the ${\rm SEP}(2j)$ we have that
\begin{enumerate}\itemsep=0pt
\item[$i)$] The symmetry
\begin{gather}\label{tiledSsep}
S_{\alpha, \beta}= \exp \left( \beta \left(-J^{+} + \frac{1-p}{p} J^{-} \right) \right) \exp\big({\rm i} \alpha J^{0} \big)
\end{gather}is unitary for every choice of $ \alpha, \beta \in \mathbb{R}$.
As a consequence the functions \linebreak $S_{\alpha, \beta} \big( D^{\rm ch}_{p}(x,\cdot ) \big) (y) $ are orthogonal $($single site$)$ self-duality functions in $\mathsf{L}^{2}(w_{p,j})$ with squared norm $\big\| D^{\rm ch}_{p}\big\|^{2}_{w_{p,j}}$.
\item[$ii)$] Choosing $\alpha=\hat{\alpha} = \pi$ and $\beta=\hat{\beta} = \sqrt{\frac{p}{1-p}} \arctan \Big( \sqrt{\frac{p}{1-p}} \Big) $ we get the Krawtchouk polynomials up to a constant: $D^{\rm or}_{p}(x,y):= S_{\hat{\alpha}, \hat{\beta} } \big( D^{\rm ch}_{p} ( x, \cdot) \big) (y) = (p-1)^{j} K(x,y;p) $.
\end{enumerate}

\item[$3.$] For the ${\rm IRW}$ we have that
\begin{enumerate}\itemsep=0pt
\item[$i)$] The symmetry
\begin{gather}\label{tiledSirw}
S_{\alpha, \beta}= \exp \big( \beta \big( -p a^{\dagger} + a \big) \big) \exp\big({\rm i} \alpha a a^{\dagger} \big)
\end{gather}extends to a unitary operator for every choice of $\alpha, \beta \in \mathbb{R}$.
As a consequence the functions $S_{\alpha, \beta} \big( D^{\rm ch}_{p} ( x, \cdot) \big) (y) $ are orthogonal $($single site$)$ self-duality functions in $\mathsf{L}^{2}(w_{p})$ with squared norm $\big\| D^{\rm ch}_{p} \big\|^{2}_{w_{p}}$.
\item[$ii)$] Choosing $\alpha=\hat{\alpha} = \pi$ and $\beta=\hat{\beta} = 1 $ we get the Charlier polynomials up to a constant: $D^{\rm or}_{p}(x,y):= S_{\hat{\alpha}, \hat{\beta} } \big( D^{\rm ch}_{p} ( x, \cdot) \big) (y) = {\rm e}^{- \frac{p}{2}} C(x,y;p) $.
\end{enumerate}
\end{enumerate}
\end{Theorem}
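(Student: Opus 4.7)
The three items have parallel structure, so I would carry out a single uniform argument and flag only the algebra-specific differences. The statement decomposes into (i) proving that $S_{\alpha,\beta}$ extends to a unitary operator on the relevant Hilbert space, which together with the orthogonality of the diagonal cheap duality immediately yields that $S_{\alpha,\beta} D^{\rm ch}_p$ is an orthogonal family with the same squared norm, and (ii) a direct computation at the distinguished values $(\hat\alpha,\hat\beta)$ to identify the output with the classical $\mathstrut_2 F_1$ or $\mathstrut_2 F_0$ polynomial (up to the stated scalar).

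For (i) my approach is to show that each of the two exponents in $S_{\alpha,\beta}$ is anti-Hermitian on the dense subspace of finitely supported functions. The adjoint relations~\eqref{adjoints su11} give, for $\mathfrak{su}(1,1)$,
\begin{gather*}
\left(-K^{+}+\tfrac{1}{p}K^{-}\right)^{*}=-\tfrac{1}{p}K^{-}+K^{+}=-\left(-K^{+}+\tfrac{1}{p}K^{-}\right),\qquad ({\rm i}K^{0})^{*}=-{\rm i}K^{0},
\end{gather*}
and analogous relations hold for $\mathfrak{su}(2)$ and the Heisenberg algebra (there $aa^{\dagger}$ is self-adjoint because the adjoints of $a$ and $a^{\dagger}$ swap them with compensating scalars, and $-pa^{\dagger}+a$ is anti-Hermitian). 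Since the representations~\eqref{discretesu11}, \eqref{discretesu2} and~\eqref{heisenbergdiscretegenerators} act on $\delta_{x}$ by raising or lowering at most one step with polynomially bounded coefficients, finitely supported functions are analytic vectors for each of the anti-Hermitian operators above. By Nelson's analytic vector theorem each is essentially skew-adjoint, and Stone's theorem then promotes its exponential to a one-parameter unitary group. The product of two unitaries is unitary, and the norm identity $\lVert D^{\rm or}_{p}\rVert^{2}_{w}=\lVert D^{\rm ch}_{p}\rVert^{2}_{w}$ follows immediately from the diagonal form of $D^{\rm ch}_p$.

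For (ii) my plan is to exploit the factorized form of $S_{\alpha,\beta}$ derived in Section~\ref{sec4}: the $\mathfrak{su}(1,1)$ BCH/Gauss decomposition rewrites
\begin{gather*}
\exp\bigl(\beta(-K^{+}+p^{-1}K^{-})\bigr)=\exp(A_{+}K^{+})\exp(A_{0}K^{0})\exp(A_{-}K^{-}),
\end{gather*}
with $A_{\pm}$ and $A_{0}$ explicit trigonometric or hyperbolic functions of $\beta$ that collapse to simple rational functions of $p$ at $\hat{\beta}=\sqrt{p}\arctanh\sqrt{p}$; composition with $\exp({\rm i}\hat{\alpha}K^{0})$ merely shifts the diagonal factor. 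Applied to $D^{\rm ch}_{p}(x,\cdot)$, the rightmost $\exp(A_{-}K^{-})$ reduces to a finite sum because it acts on a delta function, the diagonal factor contributes a multiplicative weight, and the leftmost $\exp(A_{+}K^{+})$ produces a terminating power series in $y$. Collecting Pochhammer symbols and comparing with the definition of $M(x,y;p)$ gives exactly $(p-1)^{k}M(x,y;p)$. The $\mathfrak{su}(2)$ and Heisenberg cases go through identically, with $\arctan$ replacing $\arctanh$ in the SEP case and with the simpler Heisenberg BCH producing the $\mathstrut_2 F_0$ of Charlier type in the IRW case.

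The main obstacle, in my view, is not the algebraic bookkeeping of (ii) but the functional-analytic content of (i): passing from formal skew-symmetry to a genuine unitary extension for unbounded operators, and ensuring that the BCH identity relating the exponential and factorized forms of $S_{\alpha,\beta}$ is valid on a common dense core invariant under each factor. Both issues are settled by producing a sufficiently large space of analytic vectors, and the generating-function approach announced for Section~\ref{sec4} is what makes this rigorous while never leaving the algebra of finitely supported functions.
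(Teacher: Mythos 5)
Your part~(i) is in substance the paper's own argument: the paper computes $(S_{\alpha,\beta})^{*}=\exp\big({-}{\rm i}\alpha K^{0}\big)\exp\big(\beta\big(K^{+}-\frac1p K^{-}\big)\big)=(S_{\alpha,\beta})^{-1}$ on the dense subspace of finitely supported functions and concludes that $S_{\alpha,\beta}$ extends to a unitary; your extra scaffolding via analytic vectors and Stone's theorem only makes the extension step more explicit. For part~(ii) you take a genuinely different route. The paper never leaves generating functions: Lemma~\ref{toperator} intertwines $K^{\pm}$, $K^{0}$ with first-order differential operators in a variable $t$, the action of $\exp\big(\beta\big({-}\mathpzc{K}^{+}+\frac1p\mathpzc{K}^{-}\big)\big)$ is obtained by solving a first-order PDE by the method of characteristics, and at $\hat\beta=\sqrt p\,\arctanh\sqrt p$ the resulting M\"obius substitution applied to $\big(GD^{\rm ch}_{p}\big)(t)=(t/p)^{x}$ reproduces the Meixner generating function~\eqref{kragf}. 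You instead normally order $S_{\hat\alpha,\hat\beta}$ by the $\mathfrak{su}(1,1)$ disentangling (BCH) formula and act directly on the delta function; this is precisely the content of the paper's Theorem~\ref{factsymm} together with the appendix computation, except that in the paper the factorization is itself \emph{proved} from the generating-function formula of Theorem~\ref{lemort}, so to avoid circularity you must take the disentangling coefficients from the abstract BCH identity for ${\rm SU}(1,1)$/${\rm SU}(2)$ squeeze operators (the paper's remark after Theorem~\ref{factsymm} confirms this is available, citing Truax). Granting that input, your route is sound and trades the PDE computation for a purely algebraic identification of the terminating $\mathstrut_2F_1$; the paper's route is self-contained and also yields, as a by-product, the explicit unitary action \eqref{thisoneee} on arbitrary analytic functions.

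One detail to correct: with your ordering $\exp(A_{+}K^{+})\exp(A_{0}K^{0})\exp(A_{-}K^{-})$ the computation on $D^{\rm ch}_{p}(x,\cdot)$ does \emph{not} terminate, since $\exp(A_{-}K^{-})\delta_{x,\cdot}$ has infinite support and the subsequent raising exponential then produces an infinite series at each point, which must be resummed. You want the opposite normal order, $\exp(A_{-}K^{-})\exp(A_{0}K^{0})\exp(A_{+}K^{+})$ as in Theorem~\ref{factsymm}: the rightmost raising exponential collapses the delta to a function supported on $\{y\le x\}$, after which every sum is finite and collects directly into $(p-1)^{k}M(x,y;p)$.
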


\subsection{Proof of the main result}\label{proof}
We need the following lemma to introduce the generating function and to compute the action of the algebra generators in order to prove Theorem \ref{lemort}. In particular we only consider the $\mathfrak{su}(1,1)$ algebra and the ${\rm SIP}(2k)$ process, but for the other two processes the idea is the same.
\begin{Definition}[generating functions] We will always use the definition of generating function as in \cite[formula (9.10.11)]{KLS}, i.e., the generating function $G$ of $g(y)$ is defined as
\begin{gather*}%\label{genfunc}
 ( Gg ) (t) := \sum_{y=0}^{\infty} g(y) \frac{\Gamma(2k+y)}{y!\Gamma(2k)} t^{y} , \qquad t \in \mathbb{R} .
\end{gather*}\end{Definition}
The generating function of Meixner polynomials $M(x,y;p)$ (see~\cite{KLS}) is
\begin{gather}\label{kragf}
 \sum_{y=0}^{\infty} M(x,y;p) \frac{\Gamma(2k+y) }{y! \Gamma(2k) } t^{y} = \left( 1- \frac{t}{p} \right)^{x} (1-t)^{-2k-x} .
 \end{gather}
\begin{Lemma}[intertwining of the $\mathfrak{su}(1,1)$ algebra generators]\label{toperator}
The following results hold
\begin{enumerate}\itemsep=0pt
\item[$1.$] $GK^{-}g(y) = \big(2k t + t^{2} \frac{\partial}{\partial t} \big) Gg(t) =: \mathpzc{K}^{-}Gg(t) $.
\item[$2.$] $GK^{+}g(y)= \big( \frac{\partial}{\partial t} \big) Gg(t) =: \mathpzc{K}^{+}Gg(t)$.
\item[$3.$] $GK^{0}g(y)= \big( k + t \frac{\partial}{\partial t} \big) Gg(t) =: \mathpzc{K}^{0}Gg(t)$.
\end{enumerate}
Note that $ \mathpzc{K}^{-} $, $ \mathpzc{K}^{+} $ and $ \mathpzc{K}^{0}$ so defined satisfy the commutation relations of the dual $\mathfrak{su}(1,1)$ Lie algebra.
\end{Lemma}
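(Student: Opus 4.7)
The plan is to verify each of the three identities by direct computation: substitute the action of $K^{\pm}$, $K^{0}$ given in~\eqref{discretesu11} into the defining series for $G$, then perform a single index shift together with the Gamma recurrence $(2k+y)\Gamma(2k+y)=\Gamma(2k+y+1)$ to reassemble the result as $Gg(t)$ multiplied by powers of $t$ or differentiated in $t$.

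For item~2, I would write
\begin{gather*}
GK^{+}g(t) = \sum_{y=0}^{\infty}(2k+y)\,g(y+1)\,\frac{\Gamma(2k+y)}{y!\,\Gamma(2k)}\,t^{y},
\end{gather*}
absorb the prefactor into $\Gamma(2k+y+1)$, and shift $y\mapsto y-1$; the new sum starts at $y=1$ with summand $g(y)\,y\,\frac{\Gamma(2k+y)}{y!\,\Gamma(2k)}\,t^{y-1}$, which is exactly $\partial_{t}Gg(t)$. For item~1 the same substitution yields $\sum_{y\geq 0} y\,g(y-1)\,\frac{\Gamma(2k+y)}{y!\,\Gamma(2k)}\,t^{y}$ (the convention $g(-1)=0$ already removes the $y=0$ term); shifting $y\mapsto y+1$ gives $\sum_{y\geq 0} g(y)(2k+y)\frac{\Gamma(2k+y)}{y!\,\Gamma(2k)}\,t^{y+1}$, which splits as $2kt\cdot Gg(t) + t^{2}\partial_{t}Gg(t)$. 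Item~3 requires no shift: writing $y+k$ under the sum, the $y$-part is reproduced by $t\partial_{t}$ acting on $Gg$, and the $k$-part by scalar multiplication.

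For the final assertion on the commutation relations, I would carry out the three brackets on a smooth test function of $t$ using the Leibniz rule. For instance
\begin{gather*}
[\mathpzc{K}^{0},\mathpzc{K}^{+}]f = (k+t\partial_{t})\partial_{t}f - \partial_{t}\bigl((k+t\partial_{t})f\bigr) = -\partial_{t}f,
\end{gather*}
and similarly $[\mathpzc{K}^{0},\mathpzc{K}^{-}]f = \mathpzc{K}^{-}f$ and $[\mathpzc{K}^{+},\mathpzc{K}^{-}]f = 2\mathpzc{K}^{0}f$, matching $[K^{0},K^{\pm}]=\mp K^{\pm}$ and $[K^{+},K^{-}]=2K^{0}$. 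Alternatively one may observe that items~1--3 say $G\circ K^{a} = \mathpzc{K}^{a}\circ G$ on finitely supported $g$, whence $G[K^{a},K^{b}] = [\mathpzc{K}^{a},\mathpzc{K}^{b}]G$; since $G$ is injective on this dense domain (it sends $\delta_{y}$ to a nonzero monomial in $t$), the commutation relations for the calligraphic operators are forced by those of the $K^{a}$.

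There is no genuine conceptual obstacle, and the entire argument is a bookkeeping exercise. The only care required is to respect the boundary convention $g(-1)=0$ when shifting indices in the $K^{-}$ computation, and to apply the Gamma recurrence consistently so that the denominator factorials align correctly after each change of summation variable.
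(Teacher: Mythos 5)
Your proposal is correct and follows essentially the same route as the paper: substitute the actions \eqref{discretesu11} into the series defining $G$, shift the summation index using the Gamma recurrence, and reassemble the result as $2kt\,Gg+t^{2}\partial_{t}Gg$, $\partial_{t}Gg$, and $(k+t\partial_{t})Gg$ respectively. Your explicit verification of the commutators (which the paper only asserts) is a correct and welcome addition, and both of your suggested justifications --- the direct Leibniz computation and the intertwining-plus-injectivity argument --- are sound.
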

\begin{proof}
We have
\begin{align*}
GK^{-}g(y) &= \sum_{n=0}^{\infty} yg(y-1) \frac{\Gamma(2k+y)}{y!\Gamma(2k)} t^{y} \\ & = 2k t \sum_{y=0}^{\infty} g(y) \frac{\Gamma(2k+y)}{y!\Gamma(2k)} t^{y} + t^{2} \sum_{y=0}^{\infty} g(y) \frac{\Gamma(2k+y)}{y!\Gamma(2k)} y t^{y-1} \\ & =
\left( 2k t + t^{2} \frac{\partial}{\partial t} \right) Gg(t) = \mathpzc{K}^{-}Gg(t) .
\end{align*}
This implicitly defines the operator $\mathpzc{K}^{-}$ which acts on functions of the $t$ variable as
\begin{gather*}\mathpzc{K}^{-}:= 2k t+t^{2}\frac{\partial}{\partial t} .
\end{gather*}
Similarly,
\begin{align*}
GK^{+}g(y) & = \sum_{y=0}^{\infty} (2k+y)g(y+1) \frac{\Gamma(2k+y)}{y!\Gamma(2k)} t^{y} \\
&= \sum_{y=0}^{\infty} g(y) \frac{\Gamma(2k+y)}{y!\Gamma(2k)} yt^{y-1}
 =\left( \frac{\partial}{\partial t} \right) Gg(t) = \mathpzc{K}^{+}Gg(t) ,
\end{align*}
so the operator $\mathpzc{K}^{+}$ is a first derivative with respect to $t$, defined as
\begin{gather*}
\mathpzc{K}^{+}f(t):= \frac{\partial f}{\partial t} (t) .
\end{gather*}
For $K^{0}$ we proceed in the same way
\begin{gather*}
GK^{0}g(y) = \sum_{y=0}^{\infty} (k+y) g(y)\frac{\Gamma(2k+y)}{y!\Gamma(2k)} t^{y} =
\left( k+t \frac{\partial}{\partial t} \right) Gg(t) = \mathpzc{K}^{0}Gg(t) ,
\end{gather*}
and we infer that
\begin{gather*}
\mathpzc{K}^{0}f(t):= \left( k + t \frac{\partial}{\partial t} \right) f(t) .
\end{gather*}
Note that for all the above we have called $f(t)= ( Gg(\cdot) ) (t)$.
\end{proof}

\begin{proof}[Proof of Theorem \ref{lemort}] We will only give a proof for the first item as the other two follow a~similar strategy. The first point of the first item regards the unitarity of $S_{\alpha, \beta}$ in $\mathsf{L}^{2}(w_{p,k})$, which is achieved if $ ( S_{\alpha, \beta} )^{\ast} = ( S_{\alpha, \beta} )^{-1} $. Using the adjoints~\eqref{adjoints su11} of $K^0$, $K^+$ and $K^-$ we have that $ ( S_{\alpha, \beta} )^{\ast} = \exp \big( {-}{\rm i} \alpha K^{0} \big) \exp \big( \beta \big({-}\frac{1}{p} K^{-} + K^{+} \big) \big) = ( S_{\alpha, \beta} )^{-1}$ as an operator acting on the space of finitely supported functions. Since this is dense in $\mathsf{L}^2(w_{p,k})$, $S_{\alpha, \beta}$ extends to a unitarity operator.
Unitary operators conserve the norm and so the norm of $S_{\alpha, \beta} D^{\rm ch}_{p} (x,y)$ is the same as the norm of
 $D^{\rm ch}_{p}(x,y)$ in $\mathsf{L}^{2}(w_{p,k})$. In particular, the two squared norms are
 \begin{gather*}
 \big\| S_{\alpha, \beta} D^{\rm ch}_{p} \big\|^{2}_{w_{p,k}} =\big\| D^{\rm ch}_{p} \big\|^{2}_{w_{p,k}}= \frac{y! \Gamma(2k)}{\Gamma(2k+y) } p^{-y} (1-p)^{2k} .
 \end{gather*}
We show now the proof of the second point using a generating function approach. The idea is to show that the generating function of $D^{\rm or}_{p}=S_{\hat{\alpha}, \hat{\beta}} D^{\rm ch}_{p}$ and the Meixner polynomials are the same, i.e.,
\begin{gather} \label{genkra}
G\big( S_{\hat{\alpha}, \hat{\beta} } \big( D^{\rm ch}_{p} ( x, \cdot) \big) \big) (y)= G \bigl ( (p-1)^{k} M(x,\cdot ;p) \bigr) (y)
 \end{gather} and so using the generating function of Meixner polynomials in equation~\eqref{kragf} one has that the r.h.s.\ of equation~\eqref{genkra} is $(p-1)^{-k} (1-t)^{-2k-x} \big(1- \frac{t}{p} \big)^{x} $.
For the l.h.s.\ instead of computing $ G \big( S_{\alpha, \beta} D^{\rm ch}_{p} \big) (t)$ we use Lemma~\ref{toperator} to evaluate $ \mathpzc{S}_{\alpha, \beta} \big( G D^{\rm ch}_{p} \big) (t) $, here
\begin{gather*}
 \mathpzc{S}_{\alpha, \beta} = \exp \left( \beta \left(-\mathpzc{K}^{+} + \frac{1}{p} \mathpzc{K}^{-}\right) \right) \exp \big( {\rm i} \alpha \mathpzc{K}^{0} \big),
\end{gather*}
where $\mathpzc{K}^{+}$, $\mathpzc{K}^{-}$ and $\mathpzc{K}^{0} $ are those in Lemma \ref{toperator}, which we consider as operators on functions that are analytic at $0$.
In other words, we have to find the action of the operator $ \mathpzc{S}_{\alpha, \beta}$ on
\begin{gather*}
\big( G D^{\rm ch}_{p} \big) (t) = \sum_{y=0}^{\infty} \frac{y! \Gamma(2k)}{\Gamma(2k+y)} p^{-y} \delta_{x,y} \frac{\Gamma(2k+y)}{y! \Gamma(2k)} t^{y} = \left( \frac{t}{p} \right) ^{x} .
\end{gather*}
The action of $\exp \big( {\rm i} \alpha \mathpzc{K}^{0} \big)$ on $f(t)=Gg (t)$ is
\begin{gather*}
 \big( {\rm e}^{{\rm i} \alpha} \big)^{\mathpzc{K}^{0}} f(t) := G\Big( \big( {\rm e}^{{\rm i} \alpha} \big)^{K^{0}} g \Big) (t) = \sum_{y=0}^{\infty} \frac{\Gamma(2k+y)}{y! \Gamma(2k)} t^{y} ({\rm e}^{{\rm i} \alpha} )^{y+k} g(y) = ({\rm e}^{{\rm i} \alpha} )^{k} f\big( {\rm e}^{{\rm i} \alpha}t \big) .
\end{gather*}
Letting $\alpha= \hat{\alpha}= \pi$ one has
\begin{gather}\label{neweq}
\exp \big( {\rm i} \pi \mathpzc{K}^{0} \big) f(t) = (-1)^{k} f(-t) .
\end{gather}To find the action of $ \exp \big( \beta \big({-}\mathpzc{K}^{+} + \frac{1}{p} \mathpzc{K}^{-}\big) \big) $ we will solve a partial differential equation, whose solution $\psi(t,\beta) $ is
the action of $ \mathpzc{S}_{\alpha, \beta}$ on function $f(t)$. Using Lemma~\ref{toperator}, this is
\begin{gather}\label{fi}
\psi(t, \beta) = {\rm e}^{\beta \big[ \big( \frac{t^{2}}{p} -1 \big) \frac{\partial }{\partial t} + \frac{2k}{p} t \big] } f(t)
\end{gather}
with initial condition $\psi(t, 0) = f(t)$. Here, it is understood that for the operator $B:= \big( \frac{t^{2}}{p} -1 \big) \frac{\partial }{\partial t} + \frac{2k}{p} t$ and a function $f$ in its domain, the exponential ${\rm e}^{\beta B} f$ is defined as the solution of the partial differential equation $\frac{\partial}{\partial \beta} g(\beta,t)= B g(\beta,t)$ with initial condition $g(0,t)=f(t)$. Thus, deriving both sides of~\eqref{fi} with respect to $\beta$, we get a first-order PDE for~$\psi$:
\begin{gather}\label{fii}
\frac{\partial \psi }{\partial \beta} - \left( \frac{t^{2}}{p} -1 \right) \frac{\partial \psi }{\partial t} - \frac{2kt}{p} \psi =0.
\end{gather}To solve the PDE we use the method of characteristics: we consider $ \psi $ along the characteristic plane $(\tau, s)$, so
that along a characteristic curve $\tau$ is constant and $ \psi(t,\beta) = \psi(t(s),\beta(s)) $. We then have
\begin{gather*}
\frac{\partial \psi}{\partial s} = \frac{\partial \psi }{\partial \beta} \frac{\partial \beta }{\partial s} + \frac{\partial \psi }{\partial t} \frac{\partial t}{\partial s} .
\end{gather*}
Comparing the above with the PDE in equation \eqref{fii} we just have to solve a system of three first-order ODEs:
\begin{gather*}
\frac{\partial \beta }{\partial s} = 1, \\
\frac{\partial t}{\partial s} = \frac{p-t^{2}}{p},\\
\frac{\partial \psi }{\partial s} = \frac{2k t}{p} \psi.
\end{gather*}
From the first equation we have immediately that $\beta = s $, while the second has solution
\begin{gather*}
t(s) = \sqrt{p} \frac{\tanh\big(s /\sqrt{p} \big) + \tanh (c_{1})}{1+ \tanh\big(s /\sqrt{p} \big) \tanh (c_{1})} .
\end{gather*}
Using the initial condition $t(0)= \sqrt{p} \tanh(c_{1}) = \tau$ we get $c_{1}=\arctanh\big( \tau/ \sqrt{p}\big) $ and so
\begin{gather*}
t(s) = \sqrt{p} \frac{ \tau/ \sqrt{p} + \tanh\big(s /\sqrt{p} \big)}{1+\tau/ \sqrt{p} \tanh\big(s /\sqrt{p} \big)} .
\end{gather*}
Substituting $t$ in the last ODE we find that
\begin{gather*}
\psi(s)=\big( \tau \sinh\big(s/ \sqrt{p}\big)+ \sqrt{p} \cosh\big(s/ \sqrt{p}\big) \big)^{2k} c_{2} .
\end{gather*}
To find $c_{2}$ we use the initial condition in the characteristic plane, i.e.,
$\psi(0) = f(\tau)=p^{k}c_{2}$ so $c_{2}=\frac{f(\tau)}{p^{k}}$ and so our solution in the $(\tau, s)$ plane is
\begin{gather*}
\psi(\tau, s)=f(\tau) \left( \frac{\tau}{\sqrt{p}} \sinh\big(s/ \sqrt{p}\big)+ \cosh\big(s/ \sqrt{p}\big) \right)^{2k} .
\end{gather*}
In the $(t, \beta)$ plane this becomes
\begin{gather*}
\psi(t, \beta)= f\left( \sqrt{p} \frac{t- \sqrt{p} \tanh \big(\beta / \sqrt{p}\big)}{\sqrt{p} - t \tanh \big(\beta / \sqrt{p}\big)} \right) \left( - \frac{t}{\sqrt{p}} \sinh\big(\beta/ \sqrt{p}\big) + \cosh\big(\beta/ \sqrt{p}\big)\right) ^{-2k} .
\end{gather*}
Setting $\beta=\hat{\beta} = \arctanh \big( \sqrt{p} \big) \sqrt{p} $ the above expression simplifies to
\begin{gather}\label{thisonee}
{\rm e}^{\hat{\beta} \big[ \big( {-}1 + \frac{t^{2}}{p}\big) \frac{\partial }{\partial t} + \frac{2k}{p} t\big] } f(t) = \left(\frac{1-t}{\sqrt{1-p}} \right) ^{-2k} f\left( \frac{t-p}{1-t}\right) .
\end{gather}
Equation \eqref{neweq} together with \eqref{thisonee} finally gives
\begin{gather}\label{thisoneee}
 \mathpzc{S}_{\hat{\alpha}, \hat{\beta}}f(t) = (p-1)^{k}(1-t)^{-2k} f\left( \frac{p-t}{1-t} \right) .
\end{gather}
Last, we need to set $f(t)=\big( G D^{\rm ch}_{p}\big) (t)= \big( \frac{t}{p}\big) ^{x}$ to finally get
\begin{gather*}
 \mathpzc{S}_{\hat{\alpha}, \hat{\beta}}\big( G D^{\rm ch}_{p} \big) (t) = (p-1)^{k} (1-t)^{-2k-x} \left( 1- \frac{t}{p}\right)^{x} ,
\end{gather*}
which matches the generating function of the Meixner polynomials.
\end{proof}

In the following section we give a different expression for the three unitary symmetries $S_{\hat{\alpha},\hat{\beta}}$ of Theorem~\ref{lemort}.

\subsection{Factorized symmetries}\label{sec4}
We now want to study the unitary symmetries that arise from the previous section. Since we do not know how to act with these symmetries on functions $f(x) \in\mathscr{F}(\mathbb{N})$, we wonder if a~`factorized' version of $S_{\hat{\alpha},\hat{\beta}}$ exists, i.e., if we can find~$a$,~$b$ and $c$ such that
\begin{gather*}
S_{\hat{\alpha},\hat{\beta}}={\rm e}^{aK^{-}}{\rm e}^{bK^{0}}{\rm e}^{cK^{+}} .
\end{gather*}
The advantage of having a factorized symmetry is that one can directly compute its action on~$f(x)$ (without passing via generating functions), even if, on the other hand, the unitary property is not an immediate consequence of this form. In the next section we will relate this factorized form to another symmetry.

\begin{Theorem}[factorized unitary symmetries] \label{factsymm} The three orthogonal symmetries $S_{\hat{\alpha},\hat{\beta}}$ can also be written in a factorized version using the appropriate algebra generators.
\begin{enumerate}\itemsep=0pt
\item[$1.$] The action of $S_{\hat{\alpha},\hat{\beta}}$ in equation~\eqref{tiledSsip} coincides with the action of ${\rm e}^{K^{-}}{\rm e}^{\log(p-1)K^{0}}{\rm e}^{p K^{+}}$.
\item[$2.$] The action of $S_{\hat{\alpha},\hat{\beta}}$ in equation~\eqref{tiledSsep} coincides with the action of ${\rm e}^{J^{-}}{\rm e}^{\log\big( \frac{1}{p-1}\big) J^{0}}{\rm e}^{\frac{p}{1-p}J^{+}}$.
\item[$3.$] The action of $S_{\hat{\beta}}$ in equation~\eqref{tiledSirw} coincides with the action of ${\rm e}^{a}{\rm e}^{-p/2+{\rm i} \pi a a^{\dagger}}{\rm e}^{p a^{\dagger}}$.
\end{enumerate}
\end{Theorem}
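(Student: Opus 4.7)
The plan is to verify that the two operators for each of the three systems coincide as operators on a dense subspace (for instance, the finitely supported functions, on which all manipulations below make sense). The cleanest route is to transport both sides through the generating-function intertwiner of Lemma \ref{toperator} and its analogues for $\mathfrak{su}(2)$ and the Heisenberg algebra, since under this intertwiner an exponential of a Lie algebra element becomes an explicit transformation of an analytic function. Because the generating-function map is injective, agreement on its image implies agreement of the original operators.

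For item~1, recall from the proof of Theorem~\ref{lemort} that the left-hand side intertwines to
\[
\mathpzc{S}_{\hat\alpha,\hat\beta}f(t) = (p-1)^{k}(1-t)^{-2k} f\!\left(\frac{p-t}{1-t}\right).
\]
For the right-hand side, I would compute the action of each factor separately, working from right to left. The rightmost factor $e^{pK^{+}}$ intertwines to $e^{p\mathpzc{K}^{+}}=e^{p\partial_t}$, which is the shift $f(t)\mapsto f(t+p)$. The middle factor $e^{\log(p-1)K^{0}}$ intertwines to $e^{\log(p-1)(k+t\partial_t)}$, which acts as the dilation $f(t)\mapsto (p-1)^{k}f((p-1)t)$ (since $t\partial_t$ is diagonalised by $t^{n}$ with eigenvalue $n$). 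The leftmost factor $e^{K^{-}}$ intertwines to $e^{2kt+t^{2}\partial_t}$; its action is obtained by solving the first-order linear PDE $\partial_s u = (2kt+t^{2}\partial_t)u$ with $u(t,0)=g(t)$ by the method of characteristics, exactly as in the proof of Theorem~\ref{lemort}, yielding $g(t)\mapsto (1-t)^{-2k} g\!\left(t/(1-t)\right)$ at $s=1$. Composing these three transformations and simplifying the argument $(p-1)\cdot t/(1-t)+p = (p-t)/(1-t)$ reproduces $\mathpzc{S}_{\hat\alpha,\hat\beta}f(t)$.

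For items~2 and~3, the same strategy applies with the analogous intertwiners. For $\mathfrak{su}(2)$ one uses a generating function $Gg(t)=\sum_{y=0}^{2j}\binom{2j}{y}g(y)t^{y}$ in which $J^{+},J^{-},J^{0}$ become explicit first-order differential operators; the three factors $e^{J^{-}}$, $e^{\log(1/(p-1))J^{0}}$ and $e^{\frac{p}{1-p}J^{+}}$ then become a shift, a dilation and a solvable first-order PDE, and their composition is matched against the already-computed intertwined form of \eqref{tiledSsep}. For the Heisenberg case one uses $Gg(t)=\sum g(y)t^{y}/y!$ under which $a^{\dagger}\mapsto \partial_t$, $a\mapsto t$ and $aa^{\dagger}\mapsto t\partial_t$; here $e^{a}$ and $e^{pa^{\dagger}}$ are simple translation/multiplication operators and $e^{-p/2+i\pi aa^{\dagger}}$ is a diagonal operator, so the composition is even more explicit.

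The main obstacle is the evaluation of the exponential of the non-diagonal, non-shift generator (namely $e^{K^{-}}$ in case~1 and $e^{J^{-}}$ in case~2), which requires solving a first-order PDE by characteristics. Once these building blocks are in hand, the rest is the bookkeeping of a three-factor composition and an elementary algebraic simplification of the resulting rational argument of $f$. An alternative, which I would mention but not pursue in detail, is to derive the factorisation directly from a Baker--Campbell--Hausdorff identity in the relevant Lie group ($SU(1,1)$, $SU(2)$ or the Heisenberg group), which is precisely the connection announced at the end of Section~\ref{sec4}; however, the generating-function verification is self-contained and avoids any representation-theoretic identities beyond those already established.
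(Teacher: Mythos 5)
Your proposal follows essentially the same route as the paper's own proof: both sides are transported through the generating-function intertwiner of Lemma~\ref{toperator}, the right-hand side is taken from equation~\eqref{thisoneee} already established in the proof of Theorem~\ref{lemort}, and the three factors of the left-hand side are evaluated as a shift, a dilation with prefactor $(p-1)^{k}$, and the M\"obius-type action $(1-t)^{-2k}f(t/(1-t))$ of ${\rm e}^{\mathpzc{K}^{-}}$, whose composition simplifies to $(p-1)^{k}(1-t)^{-2k}f((p-t)/(1-t))$. The only cosmetic difference is that you solve the PDE for ${\rm e}^{\mathpzc{K}^{-}}$ by characteristics directly, while the paper also offers the shortcut of taking $p\to 0$ in~\eqref{thisonee}; both are noted there, so your argument is correct and matches the paper's.
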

\begin{proof}We only show the first item as the other two have similar proofs; to do that we still use generating functions. To show that
\begin{gather}\label{equalitysymmetry}
{\rm e}^{K^{-}}(p-1)^{K^{0}}{\rm e}^{p K^{+}} g(y) = \exp \left( \hat{\beta} \left(-K^{+} + \frac{1}{p} K^{-} \right) \right)\exp \big( {\rm i} \hat{\alpha}K^{0}\big) g(y) ,
\end{gather}we first consider the generating function $G$ on both sides and then flip the action of $G$ with the one of the operators to get
\begin{gather}\label{thisone}
{\rm e}^{\mathpzc{K}^{-}} (p-1)^{\mathpzc{K}^{0}} {\rm e}^{p \mathpzc{K}^{+}} f(t) = \exp \left( \hat{\beta} \left(-\mathpzc{K}^{+} + \frac{1}{p} \mathpzc{K}^{-} \right) \right) \exp \big( {\rm i} \hat{\alpha} \mathpzc{K}^{0} \big) f(t) ,
\end{gather}
where we called $f(t) = ( G g ) (t) $ and $ \mathpzc{K}^{-} $, $ \mathpzc{K}^{0} $ and $ \mathpzc{K}^{+} $ are those in Lemma~\ref{toperator}.
The r.h.s.\ of equation~\eqref{thisone} has been evaluated in the proof of Theorem~\ref{lemort}, equation \eqref{thisoneee} so we just need to find the action of $ {\rm e}^{\mathpzc{K}^{-}} $, $ (p-1)^{\mathpzc{K}^{0}} $ and $ {\rm e}^{p \mathpzc{K}^{+}} $ .
Clearly,
\begin{gather*}
 (p-1)^{\mathpzc{K}^{0}} f(t) = (p-1)^{k} f (t (p-1) )
\end{gather*}
since
\begin{gather*}
 (p-1)^{\mathpzc{K}^{0}} f(t) := G\Big( (p-1)^{K^{0}} g(y)\Big)\\
 \hphantom{(p-1)^{\mathpzc{K}^{0}} f(t)}{} = \sum_{y=0}^{\infty} \frac{\Gamma(2k+y)}{y! \Gamma(2k)} t^{y} (p-1)^{y+k} g(y) = (p-1)^{k} f ( t(p-1) ) .
\end{gather*}
For $ {\rm e}^{\mathpzc{K}^{-}} $ one can solve the associated PDE as in the proof of Theorem \ref{lemort}, or equivalently considering the limit as $p\rightarrow 0$ on both sides of equation~\eqref{thisonee} and using that $ \lim\limits_{p\rightarrow 0} \frac{\arctanh(\sqrt{p})}{\sqrt{p}}=1$
leads to
\begin{gather*}
{\rm e}^{\mathpzc{K}^{-}} f(t) = (1-t)^{-2k} f\left( \frac{t}{1-t} \right) .
\end{gather*}
Last, for $ {\rm e}^{p \mathpzc{K}^{+}} $ we have that
\begin{gather*}
\big( {\rm e}^{p \mathpzc{K}^{+}} f \big) (t) = {\rm e}^{ p \frac{\partial}{\partial t} } f (t) = f ( t + p )
\end{gather*}
since the action of the first derivative is a shift. Acting on $f(t)$, we have
\begin{gather*}
{\rm e}^{\mathpzc{K}^{-}}(p-1)^{\mathpzc{K}^{0}} {\rm e}^{p \mathpzc{K}^{+}} f(t) = {\rm e}^{\mathpzc{K}^{-}}(p-1)^{\mathpzc{K}^{0}} f ( t + p ) =
(p-1)^{k}{\rm e}^{\mathpzc{K}^{-}} f ( t(p-1) + p ) \\
\hphantom{{\rm e}^{\mathpzc{K}^{-}}(p-1)^{\mathpzc{K}^{0}} {\rm e}^{p \mathpzc{K}^{+}} f(t)}{} =(p-1)^{k}(1-t)^{-2k} f \left( \frac{t}{1-t}(p-1) + p \right)\\
\hphantom{{\rm e}^{\mathpzc{K}^{-}}(p-1)^{\mathpzc{K}^{0}} {\rm e}^{p \mathpzc{K}^{+}} f(t)}{} = (p-1)^{k}(1-t)^{-2k} f \left( \frac{p-t}{1-t} \right) ,
\end{gather*}
which matches the action of $S_{\hat{\alpha},\hat{\beta}}$ in equation~\eqref{thisoneee}.
\end{proof}

\begin{Remark}[Baker--Campbell--Hausdorff formula for dual $\mathfrak{su}(1,1)$ algebra] The identity given in equation \eqref{equalitysymmetry} can also be established as a consequence of the Baker--Campbell--Hausdorff formula for the $\mathfrak{su}(1,1)$ algebra, see \cite[formula~(24b)]{truax1985} adapted to the dual $\mathfrak{su}(1,1)$ algebra. In formula~(24b) one has the following replacement $L_{+}=\frac{1}{\sqrt{p}}K^{-}$, $L_{-}=\sqrt{p}K^{+}$ and $L_{0}=K^{0}$ and in particular one has to set $\tau=\arctanh(\sqrt{p})$ and $\alpha= \pi$.
\end{Remark}
The added value of having the factorized version of the symmetry $S_{\hat{\alpha},\hat{\beta}} $ is that one can immediately verify its action on the cheap duality
$D^{\rm ch}_{p}(x,y)$: via a straightforward computation one can produce the orthogonal polynomials of Theorem \ref{lemort}, as we show in the proposition below.
\begin{Proposition}[direct computation of orthogonal polynomials]\label{prop11}
Acting with the factorized symmetry on the cheap self-duality function one gets the orthogonal self-duality function. In particular, for the ${\rm SIP}(2k)$ this is
\begin{gather*}
{\rm e}^{K^{-}}{\rm e}^{\log(p-1)K^{0}}{\rm e}^{p K^{+}} \big( D^{\rm ch}_{p} ( x, \cdot) \big) (y) = D^{\rm or}_{p}(x,y).
\end{gather*}
\end{Proposition}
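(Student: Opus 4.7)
The plan is to apply the factorized symmetry one factor at a time, from right to left, to the delta-supported function $D^{\rm ch}_p(x,\cdot)$, and at the end recognize the resulting finite sum as the hypergeometric series defining the Meixner polynomial. Since each factor is either a diagonal multiplier ($e^{\log(p-1)K^0}$) or the exponential of a pure raising/lowering operator ($e^{K^-}$, $e^{pK^+}$), and since the action of each generator is elementary by~\eqref{discretesu11}, every sum that appears can be handled by direct manipulation. This avoids the generating function detour used to prove Theorems~\ref{lemort} and~\ref{factsymm}.

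First, I would compute $e^{pK^+} D^{\rm ch}_p(x,\cdot)(y)$. Using the iterated action $(K^+)^i f(y)=\frac{\Gamma(2k+y+i)}{\Gamma(2k+y)}f(y+i)$ together with the delta-function structure of $D^{\rm ch}_p(x,\cdot)$, the exponential series collapses to a single term corresponding to $i=x-y$. A short bookkeeping yields
\begin{gather*}
e^{pK^+} D^{\rm ch}_p(x,\cdot)(y) = \frac{x!}{(x-y)!}\,\frac{\Gamma(2k)}{\Gamma(2k+y)}\,p^{-y}\,\mathbf{1}_{\{y\leq x\}},
\end{gather*}
which is nothing but the classical self-duality function $D^{\rm cl}_p(x,y)$ of Proposition~\ref{clalemm} -- a pleasant intermediate identity. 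The second step is immediate: since $K^0$ is diagonal, $e^{\log(p-1)K^0}$ simply multiplies the previous expression by $(p-1)^{y+k}$.

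For the third step, the formula $(K^-)^i f(y)=\frac{y!}{(y-i)!}f(y-i)\mathbf{1}_{\{i\leq y\}}$ gives $e^{K^-}g(y)=\sum_{i=0}^{y}\binom{y}{i}g(y-i)$. Substituting the current expression and reindexing $j=y-i$, the sum becomes
\begin{gather*}
(p-1)^{k} \sum_{j=0}^{\min(x,y)} \frac{x!\,y!}{(x-j)!\,(y-j)!\,j!}\,\frac{\Gamma(2k)}{\Gamma(2k+j)}\,\left(\frac{p-1}{p}\right)^{\!j},
\end{gather*}
where the upper cutoff $\min(x,y)$ arises by combining $\mathbf{1}_{\{y-i\leq x\}}$ (propagated from step one) with $i\leq y$. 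Rewriting the quotients via $(-x)_j(-y)_j = x!\,y!/[(x-j)!(y-j)!]$ (the two sign factors cancel) and $(2k)_j=\Gamma(2k+j)/\Gamma(2k)$ identifies the sum with $\rFs{2}{1}{-x,-y}{2k}{1-1/p} = M(x,y;p)$, giving $D^{\rm or}_p(x,y) = (p-1)^{k} M(x,y;p)$ as claimed.

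I do not anticipate a real obstacle. The only point requiring care is to track the two indicator constraints through the three steps so that the final summation range matches $\{0,\ldots,\min(x,y)\}$, which is precisely the natural truncation of $\rFs{2}{1}{-x,-y}{2k}{\,\cdot\,}$ when $x,y\in\mathbb{N}$. Once this reindexing is executed cleanly, the identification with the Meixner polynomial is automatic, and the analogous proofs for the $\mathfrak{su}(2)$/Krawtchouk and Heisenberg/Charlier cases follow the same scheme with the appropriate bounded-support or Poisson-type truncations replacing $\mathbf{1}_{\{y\leq x\}}$.
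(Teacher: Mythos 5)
Your proposal is correct and follows essentially the same route as the paper's appendix proof: act with the three factors from right to left on the $y$ variable, let the delta collapse the $\mathrm{e}^{pK^+}$ series to the single term $i=x-y$, apply the diagonal factor, expand $\mathrm{e}^{K^-}$ into a finite sum, and reindex to recognize the closed form of $M(x,y;p)$. The only (harmless) cosmetic difference is that you explicitly identify the intermediate expression after the first step as the classical duality function $D^{\rm cl}_p$, which the paper leaves implicit.
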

\begin{proof}
The proof follows a straightforward computation, see the appendix.
\end{proof}

\section{Orthogonal self-duality via scalar products} \label{sec5}
In this section we first show how duality and self-duality function emerge as a consequence of what we call scalar product approach and which is introduced below. We then give some hypo\-the\-sis to guarantee that such self-duality functions are biorthogonal. To conclude we implement this recently developed technique to find Meixner polynomials as orthogonal self-duality functions for the ${\rm SIP}(2k)$, in a similar way one could find orthogonal self-dualities for ${\rm SEP}(2j)$ and~IRW.
\subsection{Scalar product approach} \label{sec5.1}
In this section we present a new technique to approach duality: the naive idea is that the scalar product of two duality functions is still a duality function. We define the scalar product on some measure space $\mathsf{L}^{2}(\mathscr{S}, \mu)$, in the usual way, i.e.,
\begin{gather*}
\langle f,g \rangle_{\mu} = \sum_{x \in \mathscr{S}} f(x) g(x) \mu(x) .
\end{gather*}
We will show that -- in the setting of reversible processes -- once two
duality relations are available then it is possible to generate new different duality functions starting from the initial ones.
Suppose we have three processes with generators $L_{1}$, $L_{2}$ and $L_{3}$ and state space $ \mathscr{S}_{1} $, $ \mathscr{S}_{2} $ and~$\mathscr{S}_{3} $, respectively.
In particular, assume that $d_{1}$ is a duality function for $L_{1}$ and $L_{2}$, while $d_{2}$ is a duality function for $L_{3}$ and $L_{2}$, i.e.,
\begin{gather}\label{11}
L_{1}d_{1}(\cdot , y) (x) = L_{2}d_{1}(x, \cdot) (y) \qquad \text{for} \quad (x,y) \in \mathscr{S}_{1} \times \mathscr{S}_{2}
\end{gather}and
\begin{gather}\label{12}
L_{3}d_{2}(\cdot , y) (x) = L_{2}d_{2}(x, \cdot) (y) \qquad \text{for} \quad (x,y) \in \mathscr{S}_{3} \times \mathscr{S}_{2} .
\end{gather}
Then the following proposition holds.

\begin{Proposition}[new duality functions] \label{sca-du} If $\mu$ is a reversible measure for the generator $L_{2}$ and if equations \eqref{11} and \eqref{12} hold, then the function $D\colon \mathscr{S}_{1} \times \mathscr{S}_{3} \rightarrow \mathbb{R} $, given by
\begin{gather*}D(x,y) = \langle d_{1}(x,\cdot),d_{2}(y,\cdot) \rangle_{\mu}
\end{gather*}is a duality function for $L_{1}$ and $L_{3}$. If $L_{1}=L_{2}=L_{3}=L$, then $D$ is a new self-duality function for $L$.
\end{Proposition}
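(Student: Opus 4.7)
The plan is to verify the generator form of the duality identity
\[
(L_1 D(\cdot, y))(x) = (L_3 D(x, \cdot))(y)
\]
directly, by expanding the inner product as a sum over $\mathscr{S}_2$ and shuffling the three operators through the structures we have at our disposal: the two given dualities and the reversibility of $\mu$ with respect to $L_2$.

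Concretely, writing
\[
D(x,y) = \sum_{z \in \mathscr{S}_2} d_1(x,z)\, d_2(y,z)\, \mu(z),
\]
I would act with $L_1$ on the $x$-variable and push it inside the sum, so that it hits only $d_1(\cdot,z)$. The first duality \eqref{11} then rewrites $(L_1 d_1(\cdot,z))(x)$ as $(L_2 d_1(x,\cdot))(z)$, turning the expression into $\langle L_2 d_1(x,\cdot), d_2(y,\cdot)\rangle_\mu$. At this point I invoke reversibility: since $\mu$ is reversible for $L_2$, the operator $L_2$ is symmetric on $\mathsf{L}^2(\mathscr{S}_2,\mu)$, so it may be transferred to the second factor, giving $\langle d_1(x,\cdot), L_2 d_2(y,\cdot)\rangle_\mu$. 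Finally the second duality \eqref{12}, applied to the pair $(y,z) \in \mathscr{S}_3 \times \mathscr{S}_2$, replaces $(L_2 d_2(y,\cdot))(z)$ by $(L_3 d_2(\cdot,z))(y)$, after which $L_3$ (which operates only on $y$) can be pulled back out of the sum, yielding $(L_3 D(x,\cdot))(y)$. The self-duality statement then follows by specializing $L_1 = L_2 = L_3 = L$ and $d_1 = d_2 = d$; no separate argument is needed.

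The substantive issue is analytic rather than algebraic. Each step above tacitly assumes that the sums converge absolutely and that the (generally unbounded) operators $L_1$, $L_2$, $L_3$ commute with summation over $\mathscr{S}_2$. The crucial step is the use of reversibility to transfer $L_2$ across the inner product: this is the formal adjointness identity $\langle L_2 f, g\rangle_\mu = \langle f, L_2 g\rangle_\mu$, which is only valid on an appropriate common domain. For the interacting particle systems considered in this paper, the duality functions $d_1(x,\cdot)$ and $d_2(y,\cdot)$ that we have in mind are either compactly supported on $\mathscr{S}_2$ (e.g.\ the classical lower triangular dualities) or fall off fast enough that all relevant sums telescope with jump rates of finite range, so no domain issue actually arises. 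For a general statement one would have to state the hypothesis that $d_1(x,\cdot), d_2(y,\cdot)$ lie in a suitable core of $L_2$ in $\mathsf{L}^2(\mathscr{S}_2,\mu)$, but otherwise the argument is the chain of four substitutions above.
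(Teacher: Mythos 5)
Your argument is exactly the paper's proof: expand the inner product, move $L_1$ onto $d_1$, use the first duality to convert it into $L_2$ acting on the $\mathscr{S}_2$-variable, use self-adjointness of $L_2$ with respect to the reversible measure $\mu$ to shift it onto $d_2$, apply the second duality to turn it into $L_3$ acting on $y$, and pull $L_3$ out of the sum. The remarks on domains and interchanging sums with unbounded operators are sensible extra care that the paper leaves implicit, but they do not change the route.
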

\begin{proof} For $i=1,2,3$, $L_{i,x}D(x,y)$ stands for $(L_{i}D(\cdot,y))(x)$ the action of $L_i$ on the $x$ variable of $D$. Then,
\begin{align*}
L_{1, x}D(x,y) &= \langle L_{1,x}d_{1}(x,\cdot),d_{2}(y,\cdot) \rangle_{\mu} = \sum_{z} L_{2,z} d_{1}(x,z) d_{2}(y,z) \mu(z) \\ &= \sum_{z} d_{1}(x,z) L_{2, z}d_{2}(y,z) \mu(z) = \langle d_{1}(x,\cdot), L_{3, y}d_{2}(y, \cdot) \rangle_{\mu} = L_{3, y}D(x,y) ,
\end{align*}
where we use duality of $d_1$ (resp.~$d_2$) in the second (resp.\ fourth) equality and the self-adjointness of $L_2$ with respect to~$\mu$.
\end{proof}

A first application of the above proposition is shown in the example below, where we recover Laguerre polynomials as duality function between ${\rm SIP}(2k)$ and ${\rm BEP}(2k)$, which we do not introduce here, but it is well explained in \cite[Section~2.2]{CGGR2}.
\begin{Example}[duality via scalar product]\label{ex1} A parametrized family of reversible measure for the ${\rm SIP}(2k)$ process is
\begin{gather*}
\mu_{p}(z)=\frac{\Gamma(2k+z)}{ \Gamma(2k) z! }p^{z}, \qquad z \in \mathbb{N}, \qquad p \in (0,1)
\end{gather*}
and the classical self-duality function $D^{\rm cl}_{p}$ for ${\rm SIP}(2k)$ is in equation \eqref{sipcla}, it will be our $d_{1}(x,y)$. The last ingredient we need is a duality function between ${\rm BEP}(2k)$ and ${\rm SIP}(2k)$, a well known in the literature, see \cite[equation~(4.9)]{CGGR2}, is
\begin{gather*}
d_{2}(x,y)=\frac{x^{y}\Gamma(2k)}{\Gamma(2k+y)}(-1)^{y} , \qquad x \in \mathbb{R^{+}}, \qquad y \in \mathbb{N}.
\end{gather*}
In particular $d_{2}$ is the one we need to obtain Laguerre polynomials. Proposition \ref{sca-du} assures us that $D(x,y)= \langle d_{2}(x, \cdot), d_{2}(y, \cdot) \rangle_{\mu_p} $ is a duality function between ${\rm SIP}(2k)$ and ${\rm BEP}(2k)$ and a straightforward computation shows that $D$ is the closed form of the Laguerre polynomials. Indeed,
\begin{align*}
D(x,y)=\langle d_{2}(x, \cdot), d_{1}(y, \cdot) \rangle_{\mu_p} & = \sum_{z=0}^{\infty} \frac{(-x)^{z} \Gamma(2k)}{\Gamma(2k+z)} \frac{y!}{(y-z)!} \frac{\Gamma(2k)}{ \Gamma(2k+z)} p^{-z} \frac{\Gamma(2k+z)}{\Gamma(2k) z!} p^{z} \\
& = \sum_{z=0}^{y} \frac{(-x)^{z}}{z!} \frac{y!}{(y-z)!} \frac{\Gamma(2k)}{\Gamma(2k+z)} =\rFs{1}{1}{-y}{2k }{ x }
\end{align*}
for $y \in \mathbb{N}$ and $x \in \mathbb{R}^{+}$.
\end{Example}

We can apply Proposition \ref{sca-du} for the same generator, to construct the Meixner polynomials as ${\rm SIP}(2k)$ self-duality functions.
\begin{Example} [self-duality via scalar product] As for the previous Example \ref{ex1}, let $\mu_{p}(z)$ be the reversible measure for the ${\rm SIP}(2k)$ process. Consider now two classical self-duality functions~$d_1$ and~$d_2$ as in equation~\eqref{sipcla}. In particular, we are free to choose them without the constant, i.e.,
\begin{gather*}
d_1(x,y)=d_2(x,y)=\frac{x!}{(x-y)!} \frac{\Gamma(2k)}{\Gamma(2k+y)} p ^{-y} \mathbf 1_{y\leq x} .
\end{gather*}
 A simple computation shows that their scalar product in $\mathsf{L}^{2}(\mu_{p})$ is a Meixner polynomial. Indeed,
\begin{align*}
D(x,y)&= \langle d_2(x, \cdot), d_1(y, \cdot) \rangle_{\mu_{p}} \\
& = \sum_{z=0}^{\infty} \frac{x!}{(x-z)!} \frac{y!}{(y-z)!} \left( \frac{\Gamma(2k)}{\Gamma(2k+z)} \right) ^{2} p^{-2z} \mathbf 1_{z\leq x} \mathbf 1_{z\leq y}\cdot \frac{\Gamma(2k+z)}{\Gamma(2k) z!} p^{z} \\
& = \sum_{z=0}^{x\wedge y} \frac{1}{z!} \frac{x!}{(x-z)!} \frac{y!}{(y-z)!} \frac{\Gamma(2k)}{\Gamma(2k+z)} p^{-z}
= M ( x,y; 1-p ) \qquad \text{for} \quad x, y \in \mathbb{N}.
\end{align*}
\end{Example}
The following proposition expands the result of Proposition \ref{sca-du} in the context of self-duality. It turns out that when two self-duality functions, $d$ and $D$, are in a relation via a scalar product with a third function $F$, then, assuming $d$ to be a basis for $\mathsf{L}^{2}(\mathscr{S}, \mu)$, $F$ must also be a self-duality function.
\begin{Proposition}[basis and self-duality]\label{propscalar1}
Assume that $\left\lbrace x \mapsto d(x,n) \,|\, n \in \mathscr{S} \right\rbrace $ is a basis of self-duality functions for $\mathsf{L}^{2}(\mathscr{S}, \mu)$ where $\mu$ is a reversible measure for the generator $L$. Let $F=F(n,z)$ be a function on $\mathscr{S} \times \mathscr{S}$ and define $D$ by
\begin{gather*}
D(x,n) := \langle d(x, \cdot), F(n, \cdot) \rangle_{\mu} .
\end{gather*}
If $D$ is self-duality function, so is $F$.
\end{Proposition}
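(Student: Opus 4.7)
The plan is to differentiate the defining relation $D(x,n)=\sum_{z} d(x,z) F(n,z) \mu(z)$ with respect to both arguments via the generator $L$ and compare, using three ingredients: the self-duality of $d$, the self-adjointness of $L$ on $\mathsf{L}^{2}(\mathscr{S},\mu)$ (which is exactly the reversibility of $\mu$), and the assumed self-duality of $D$. First I would act with $L$ on the first argument of $D$, interchange $L_x$ with the $z$-sum, and use $(Ld(\cdot,z))(x)=(Ld(x,\cdot))(z)$ from the self-duality of $d$ to push the generator onto the summation variable $z$; then self-adjointness moves $L$ over to $F$, giving
\begin{gather*}
(LD(\cdot,n))(x) \;=\; \big\langle Ld(x,\cdot),\, F(n,\cdot)\big\rangle_{\mu} \;=\; \big\langle d(x,\cdot),\, LF(n,\cdot)\big\rangle_{\mu}.
\end{gather*}

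Second, acting with $L$ on the second argument of $D$ is immediate, since $L_n$ commutes with the $z$-sum and with the factor $d(x,z)$, so
\begin{gather*}
(LD(x,\cdot))(n) \;=\; \sum_{z} d(x,z)\, (LF(\cdot,z))(n)\, \mu(z).
\end{gather*}
The self-duality of $D$ equates the two expressions, so the function
\begin{gather*}
h_{n}(z)\;:=\;(LF(n,\cdot))(z)-(LF(\cdot,z))(n)
\end{gather*}
satisfies $\langle d(x,\cdot), h_{n}\rangle_{\mu}=0$ for every $x$ and every $n$.

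Third, I would invoke the basis assumption to conclude $h_{n}\equiv 0$ in $\mathsf{L}^{2}(\mathscr{S},\mu)$, which is exactly the self-duality of $F$. The main obstacle is that the hypothesis furnishes a basis indexed by the second variable, $\{d(\cdot,n):n\in\mathscr{S}\}$, whereas the pairing $\langle d(x,\cdot),h_{n}\rangle_{\mu}$ tests $h_n$ against the family $\{d(x,\cdot):x\in\mathscr{S}\}$ parametrised by the first variable. In the concrete orthogonal-polynomial cases of interest the duality kernel is symmetric in its two arguments up to a factor that is constant under the dynamics and can therefore be absorbed via Lemma~\ref{const}, so the basis in one variable transfers to one in the other; in the general formulation this two-sided completeness should be read as implicit in the hypothesis. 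Once it is granted, $h_{n}$ vanishes identically, establishing that $F$ is a self-duality function for $L$.
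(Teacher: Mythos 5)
Your argument is correct and follows essentially the same route as the paper's own proof: both compute $L_{x}D$ and $L_{n}D$ using the self-duality of $d$, the self-adjointness of $L$ with respect to the reversible measure $\mu$, and the assumed self-duality of $D$, and then invoke the basis hypothesis to force $L_{z}F(n,z)-L_{n}F(n,z)=0$. The subtlety you flag --- that what is actually needed is totality of the family $\{z\mapsto d(x,z)\}_{x}$ rather than of the family $\{x\mapsto d(x,n)\}_{n}$ named in the hypothesis --- is genuine; the paper silently identifies the two, so your explicit remark (resolved via the near-symmetry of the kernels in the concrete cases) only makes the argument more careful.
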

\begin{proof}Using the short notation we have that
\begin{gather*}
L_{x}D(x,n)= \langle L_{x}d(x, \cdot), F(n, \cdot) \rangle_{\mu} = \sum_{z \in \mathscr{S}}d(x,z) L_{z} F(n,z) \mu(z),
\end{gather*}
where we used that $d$ is self-duality and that $L$ is self-adjoint with respect to~$\mu$. On the other hand, since $D$ is a self-duality the above quantity must be equal to
\begin{gather*}
L_{n}D(x,n)= \langle d(x, \cdot), L_{n}F(n, \cdot) \rangle_{\mu} = \sum_{z \in \mathscr{S}}d(x,z) L_{n} F(n,x) \mu(z) .
\end{gather*}
From the identity $L_{x}D(x,n)=L_{n}D(x,n)$, we have
\begin{gather*}
\sum_{z \in \mathscr{S}} d(x,z) \left[ L_{z} F(n,z)- L_{n} F(n,z) \right] \mu(z)=0
\end{gather*}
and since $d$ is a basis for $\mathsf{L}^{2}(\mathscr{S}, \mu)$, necessarily $ L_{z} F(n,z)- L_{n} F(n,z)=0 $, i.e., $F$ is also a~self-duality function for~$L$.
\end{proof}

\subsection{Biorthogonal self-dualities}\label{bio}
How does the orthogonality property play a role? Not all self-duality functions built with this method turn out to be orthogonal. However, there is a sort of stability with respect to this orthogonal property in the scalar product construction. More precisely, if we start with two biorthogonal self-duality functions the scalar product construction yields novel biorthogonal self-duality functions that may happen to be \textit{equal} and therefore orthogonal.

To state the next proposition, we will use that the inverse of the reversible measure is a~self-duality function as shown in Lemma~\ref{cheaptrivialduality}.
\begin{Proposition}[biorthogonal self-duality functions] \label{prop:orthogonality} Let $\mu_1$ and $\mu_2$ be two reversible measures and $d_1$, $d_2$ be two self-duality functions for the Markov process with generator $L$. Suppose that
\begin{gather}\label{d12}
\langle d_1(x,\cdot), {d}_2(\cdot,n) \rangle_{\mu_1}=\frac{\delta_{x,n}}{\mu_2(n)} \qquad\text{and}\qquad\langle d_2(x,\cdot), {d}_1(\cdot,n) \rangle_{\mu_2}=\frac{\delta_{x,n}}{\mu_1(n)} .
\end{gather}
Then the functions
\begin{gather*}
{ D}(x,n) := \langle d_1(x,\cdot), d_1(n,\cdot)\rangle_{ \mu_1}, \qquad \widetilde D(x,n) := \langle d_2(\cdot,x) , d_2(\cdot,n) \rangle_{\mu_1}
\end{gather*}
 are biorthogonal self-duality functions for $L$, i.e.,
\begin{gather*}
\big\langle D(\cdot,m), \widetilde{D} (\cdot,n) \big\rangle_{\mu_2} = \frac{ \delta_{m,n}}{\mu_2(m)} .
\end{gather*}
In particular, if $D = \widetilde{ D}$ we have the orthogonality relations for $D$.
\end{Proposition}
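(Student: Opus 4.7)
The plan has two pieces: first show $D$ and $\widetilde D$ are themselves self-dualities, then derive the biorthogonality by a direct scalar-product manipulation that exploits the hypotheses \eqref{d12}.

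For the self-duality of $D$ and $\widetilde D$, I would invoke Proposition~\ref{sca-du} with $L_1=L_2=L_3=L$ and $\mu=\mu_1$. Since $d_1$ is a self-duality function, both $d_1(x,\cdot)$ and $d_1(n,\cdot)$ are duality functions in the sense of \eqref{11}--\eqref{12}, so the scalar product $D(x,n)$ is a self-duality function by the proposition. For $\widetilde D$, the observation is that the duality relation $[L d_2(\cdot,y)](x)=[L d_2(x,\cdot)](y)$ is symmetric in the two arguments, so the function $(z,x)\mapsto d_2(z,x)$ also serves as a self-duality for $L$; applying Proposition~\ref{sca-du} again yields that $\widetilde D$ is a self-duality function.

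For the biorthogonality, the plan is to expand
\[
\langle D(\cdot,m),\widetilde D(\cdot,n)\rangle_{\mu_2}
=\sum_x \mu_2(x)\Bigl(\sum_z d_1(x,z)d_1(m,z)\mu_1(z)\Bigr)\Bigl(\sum_w d_2(w,x)d_2(w,n)\mu_1(w)\Bigr),
\]
and then interchange the sums to isolate $\sum_x d_1(x,z)d_2(w,x)\mu_2(x)=\langle d_2(w,\cdot),d_1(\cdot,z)\rangle_{\mu_2}$, which by the second hypothesis in \eqref{d12} equals $\delta_{w,z}/\mu_1(z)$. Substituting this collapses the triple sum into $\sum_z d_1(m,z)d_2(z,n)\mu_1(z)=\langle d_1(m,\cdot),d_2(\cdot,n)\rangle_{\mu_1}$, which by the first hypothesis in \eqref{d12} equals $\delta_{m,n}/\mu_2(n)=\delta_{m,n}/\mu_2(m)$. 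This is the claimed biorthogonality, and the case $D=\widetilde D$ is then the orthogonality statement.

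The only real subtlety is bookkeeping: one must keep straight which variable of each $d_i$ is being paired with which measure, and make sure the two hypotheses in \eqref{d12} are applied in the right order (the $\mu_2$-inner product collapses first, freeing the $\mu_1$-inner product). A minor point worth checking before the main computation is that all the sums can be rearranged; under the standing assumption that $\mathscr S$ is countable and the duality functions are in the appropriate $\mathsf L^2$-spaces (so that the scalar products are finite), Fubini's theorem on the counting measure applies. Once these prerequisites are in place, the argument is essentially a two-line identity chase.
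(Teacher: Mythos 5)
Your proposal is correct and follows essentially the same route as the paper: both establish that $D$ and $\widetilde D$ are self-dualities via Proposition~\ref{sca-du}, then expand the triple sum, interchange the order of summation, and apply the two hypotheses of \eqref{d12} in the same order (the $\mu_2$-sum collapses first via the second identity, then the remaining $\mu_1$-sum collapses via the first). Your added remarks on Fubini and on the symmetric role of the arguments of $d_2$ when invoking Proposition~\ref{sca-du} for $\widetilde D$ are sensible points of care that the paper passes over with ``assuming we can interchange the order of summation.''
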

\begin{proof}
From Proposition \ref{sca-du} we have that both $D$ and $\widetilde{D}$ are self-duality functions since scalar product of self-dualities.
Assuming now we can interchange the order of summation:
\begin{align*}
\big\langle D(\cdot,m), \widetilde{ D}(\cdot,n)\big\rangle_{\mu_2} &= \sum_{x} D(x,m) \widetilde{ D}(x,n) \mu_2(x) \\
& = \sum_{x} \left(\sum_y d_1(x,y)d_1(m,y) \mu_1(y)\right) \left( \sum_z {d}_2(z,x) d_2(z,n) \mu_1(z)\right)\mu_2(x)\\
& = \sum_{y,z} { d}_1(m,y) d_2(z,n) \mu_1(y)\mu_1(z)\sum_x { d}_2(z,x) d_1(x,y)\mu_2(x) \\
& = \sum_{y,z} \mu_1(y)\mu_1(z) { d}_1(m,y) d_2(z,n) \frac{ \delta_{y,z}}{\mu_1(y)} \\
& = \sum_y { d}_1(m,y) d_2(y,n) \mu_1(y) = \frac{ \delta_{m,n}}{\mu_2(m)}.\tag*{\qed}
\end{align*}\renewcommand{\qed}{}
\end{proof}

We now implement this method to get the result below. Here we find Meixner polynomials as biorthogonal self-duality functions and with the aid of some hypergeometric functions transformation we find an orthogonal duality function.

\subsection{From biorthogonal to orthogonality self-duality functions}\label{sipkof}
According to Proposition \ref{prop:orthogonality} we need two duality functions $d_1$ and $d_2$ satisfying \eqref{d12}. For ${\rm SIP}(2k)$ recall that
\begin{gather*}
\mu_p(z):=\frac{\Gamma(2k+z)}{\Gamma(2k) z!} p^{z}
\end{gather*}
is the marginal of the (product) reversible (non normalized) measure,
and
\begin{gather*}
D^{\rm cl}_{\sfrac{1}{\la}}(x,n):= \frac{x! \Gamma(2k) \lambda^{y}}{(x-y)! \Gamma(2k+y)} \mathbf 1_{ y \leq x}
\end{gather*}
are the (single-site) classical self-duality functions. In the following we denote by $\langle\cdot,\cdot\rangle_p$ the scalar product with respect to the non-normalized measure $\mu_p$. We have the following lemma which we show for ${\rm SIP}(2k)$.
\begin{Lemma}[input self-duality functions] \label{lem:biorthogonality}
For any $p,q\in \mathbb R$ we have
\begin{gather*}
\big\langle D^{\rm cl}_{-p}(x,\cdot), D^{\rm cl}_{-q}(\cdot,n)\big\rangle_{p} = \frac{\delta_{x,n}}{\mu_q(x)},
\end{gather*}
where $D^{\rm cl}$ are the classical self-duality functions introduced in Proposition~{\rm \ref{clalemm}}.
\end{Lemma}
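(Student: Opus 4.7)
The plan is to prove the identity by direct expansion of the scalar product, relying on the fact that the weight $\mu_p(z)$ is designed precisely to cancel the Gamma-ratio in the classical self-duality function, leaving a finite alternating sum that collapses by the binomial theorem to a Kronecker delta.

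First, I would substitute the explicit forms
\begin{gather*}
D^{\rm cl}_{-p}(x,z) = \frac{x!\,\Gamma(2k)\,(-p)^{-z}}{(x-z)!\,\Gamma(2k+z)}\mathbf{1}_{z\le x}, \qquad
D^{\rm cl}_{-q}(z,n) = \frac{z!\,\Gamma(2k)\,(-q)^{-n}}{(z-n)!\,\Gamma(2k+n)}\mathbf{1}_{n\le z},
\end{gather*}
together with $\mu_p(z)=\Gamma(2k+z) p^{z}/(\Gamma(2k) z!)$. The crucial observation is that the factor $\Gamma(2k+z)/z!$ in $\mu_p(z)$ cancels the $z!/\Gamma(2k+z)$ coming from $D^{\rm cl}_{-q}(z,n)\cdot D^{\rm cl}_{-p}(x,z)$ (after pairing the $z!$ from $d_2$ with the $\Gamma(2k+z)$ in the denominator of $d_1$), and simultaneously the $(-p)^{-z}p^{z}=(-1)^{z}$ collapses the $p$-dependence entirely. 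This is the mildly surprising feature — the scalar product does not depend on the parameter $p$ of the measure.

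After these cancellations the sum runs over $n\le z\le x$ and one is left with
\begin{gather*}
\sum_{z=n}^{x}\frac{x!\,\Gamma(2k)\,(-1)^{z}(-1)^{n}q^{-n}}{(x-z)!\,(z-n)!\,\Gamma(2k+n)}.
\end{gather*}
The next step is to substitute $j=z-n$ to obtain a sum of the form $\sum_{j=0}^{x-n}(-1)^{j}\binom{x-n}{j}$, extracting the prefactor $x!/((x-n)!\,\Gamma(2k+n))\cdot \Gamma(2k)\, q^{-n}$ (the $(-1)^{n}$ combines with the remaining $(-1)^{n+j}=(-1)^{j}(-1)^{n}\cdot(-1)^{n}= (-1)^{j}$). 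By the binomial theorem the inner sum equals $(1-1)^{x-n}$, which vanishes unless $x=n$, in which case it equals $1$.

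Finally, when $x=n$ the surviving prefactor is $\Gamma(2k) q^{-x} x!/\Gamma(2k+x)$, which is exactly $1/\mu_q(x)$ by the definition of $\mu_q$. This gives the claimed identity. The only potential obstacle is a careful sign/index bookkeeping, but there is no analytical difficulty — summation and sign manipulations are elementary once one notices the $p$-cancellation that reduces the weighted sum to a plain alternating binomial sum.
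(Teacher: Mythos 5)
Your proposal is correct and follows essentially the same route as the paper: expand the scalar product, cancel the $\Gamma(2k+z)/z!$ factors against the weight, shift the summation index, and collapse the resulting binomial sum; the paper merely keeps the parameters $\lambda,\alpha$ general so the sum evaluates to $(1+\lambda p)^{x-n}$ before setting $\lambda=-1/p$, whereas you specialize at the outset and get the alternating sum $(1-1)^{x-n}$ directly. The sign bookkeeping and the identification of the surviving prefactor with $1/\mu_q(x)$ both check out.
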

\begin{proof}Note that $D^{\rm cl}_{\sfrac{1}{\la}}(x,y) = 0$ if $y > x$. Then
\begin{gather*}
\big\langle D^{\rm cl}_{\sfrac{1}{\la}}(x,\cdot), D^{\rm cl}_{\sfrac{1}{\alpha}}(\cdot,n)\big\rangle_{p} = \sum_{z=n}^x \frac{x! \Gamma(2k) \lambda^{z}}{(x-z)! \Gamma(2k+z)} \frac{z! \Gamma(2k) \alpha^{n}}{(z-n)! \Gamma(2k+n)} \frac{p^{y}\Gamma(2k+z)}{\Gamma(2k) z!},
\end{gather*}
which equals 0 if $x < n$. Suppose $x \geq n$, then shifting the summation index ($m=z-n$) gives
\begin{gather*}
\big\langle D^{\rm cl}_{\sfrac{1}{\la}}(x,\cdot), D^{\rm cl}_{\sfrac{1}{\alpha}}(\cdot,n)\big\rangle_p = \frac{(p\la\alpha)^n \Gamma(2k)}{\Gamma(2k+n)} \sum_{m=0}^{x-n} \frac{ x!}{(x-n-m)!m!}(\la p)^m .
\end{gather*}
Now use $(A)_{k+l} = (A)_k (A+k)_l$, i.e.,
$ \frac{\Gamma(A+k+l)}{\Gamma(A)}=\frac{\Gamma(A+k)}{\Gamma(A)} \frac{\Gamma(A+k+l)}{\Gamma(A+k)}$ and the binomial theorem to obtain
\begin{align*}
\big\langle D^{\rm cl}_{\sfrac{1}{\la}}(x,\cdot), D^{\rm cl}_{\sfrac{1}{\alpha}}(\cdot,n)\big\rangle_p &= \frac{x!}{(x-n)!} \frac{\Gamma(2k)}{\Gamma(2k+n)} (p \la \alpha)^{n} \sum_{m=0}^{x-n} \frac{(x-n)!}{(x-n-m)!m!}(\la p)^m \\
 &=\frac{x!}{(x-n)!} \frac{\Gamma(2k)}{\Gamma(2k+n)} (p \la \alpha)^{n} (1+\la p)^{x-n} .
\end{align*}
Setting $\la= -\frac{1}{p} $ and $\alpha=-\frac 1 q$ we get the result, i.e.,
\begin{gather*}
\big\langle D^{\rm cl}_{-p}(x,\cdot), D^{\rm cl}_{-q}(\cdot,n)\big\rangle_p =\frac{x! \Gamma(2k)}{\Gamma(2k+x)} \left(+\frac{1}{q} \right)^{x} \delta_{x,n} = \frac{\delta_{x,n}}{\mu_{q}(x)} .\tag*{\qed}
\end{gather*}\renewcommand{\qed}{}
\end{proof}

\begin{Proposition}[from biorthogonal to orthogonality self-duality functions] The self-duality functions
\begin{gather*}
\begin{split}
D(x,n)=\big\langle D^{\rm cl}_{-q}(x,\cdot), D^{\rm cl}_{-q}(n,\cdot) \big\rangle_{q}
\end{split}
\end{gather*}
and
\begin{gather*}
 \widetilde D(x,n)=\big\langle D^{\rm cl}_{-p}(\cdot,x), D^{\rm cl}_{-p}(\cdot,n) \big\rangle_{q}
\end{gather*}
are biorthogonal with respect to the stationary measure of their associated process. In details,
\begin{enumerate}\itemsep=0pt
\item[$1.$] For ${\rm SIP}(2k)$ we have
\begin{gather*}
D(x,n)=\rFs{2}{1}{-x,-n}{2k}{\frac{1}{q}}
\end{gather*}
and
\begin{gather*}
\widetilde D(x,n)=\left( \frac{q}{p(1-q)}\right)^{n+x} (1-q)^{-2k} \rFs{2}{1}{-x,-n}{2k}{\frac{1}{q}}
\end{gather*}
and they are biorthogonal with respect to the measure $w_{p,k}$. In particular, for the choice
\begin{gather*}
\frac 1 q=1-\frac 1 p
\end{gather*}
we have
\begin{gather*}
\begin{split}
D(x,n)=M(x,n;p)\qquad \text{and} \qquad \widetilde D(x,n)=(1-p)^{2k} D(x,n)
\end{split}
\end{gather*}
so that the biorthogonality relation recovers the orthogonality relation of Meixner polynomials.

\item[$2.$] For ${\rm SEP}(2j)$ we have that
\begin{gather*}
D(x,n)=\rFs{2}{1}{-x,-n}{-2j}{\frac{q-1}{q}}
\end{gather*}
and
\begin{gather*}
 \widetilde D(x,n)=\left( \frac{q(p-1)}{p}\right)^{n+x} (1-q)^{-2j} \rFs{2}{1}{-x,-n}{-2j}{\frac{q-1}{q}}
\end{gather*}
 and they are biorthogonal with respect to the measure $w_{p,j}$. In particular, for the choice
\begin{gather*}
\frac 1 q=1-\frac 1 p
\end{gather*}
we have
\begin{gather*}
\begin{split}
D(x,n)=K(x,n;p)\qquad \text{and} \qquad \widetilde D(x,n)=(1-p)^{2j} D(x,n)
\end{split}
\end{gather*}
so that the biorthogonality relation recovers the orthogonality relation of Krawtchouk polynomials.

\item[$3.$] For ${\rm IRW}$ we have that
\begin{gather*}
D(x,n)=\rFs{2}{0}{-x,-n}{-}{\frac{1}{q}} \qquad \text{and} \qquad \widetilde D(x,n)= \left( -\frac{q}{p} \right)^{x+n} {\rm e}^{q} \rFs{2}{0}{-x,-n}{-}{\frac{1}{q}}
\end{gather*}
 and they are biorthogonal with respect to the measure $w_p$. In particular, for the choice
\begin{gather*}
q=-p
\end{gather*}
we have
\begin{gather*}
\begin{split}
D(x,n)=C(x,n;p)\qquad \text{and} \qquad \widetilde D(x,n)={\rm e}^{-p} D(x,n)
\end{split}
\end{gather*}
so that the biorthogonality relation recovers the orthogonality relation of Charlier polynomials.
\end{enumerate}
\end{Proposition}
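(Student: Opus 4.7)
The plan is to apply Proposition~\ref{prop:orthogonality} with the classical self-duality functions of Proposition~\ref{clalemm} for two different parameters, and then to recognize the resulting scalar products as hypergeometric functions that specialize to the named polynomials. I focus on the SIP case; the SEP and IRW arguments are structurally identical, with the obvious Lie algebra replacements and with a~${}_2F_0$ instead of a~${}_2F_1$ in the IRW case.

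For the biorthogonality, I take $d_1=D^{\rm cl}_{-q}$, $d_2=D^{\rm cl}_{-p}$, $\mu_1=\mu_q$ and $\mu_2=\mu_p$. Lemma~\ref{lem:biorthogonality} supplies $\langle D^{\rm cl}_{-p}(x,\cdot),D^{\rm cl}_{-q}(\cdot,n)\rangle_p=\delta_{x,n}/\mu_q(x)$, and the same lemma with $p$ and $q$ interchanged gives $\langle D^{\rm cl}_{-q}(x,\cdot),D^{\rm cl}_{-p}(\cdot,n)\rangle_q=\delta_{x,n}/\mu_p(x)$. These are exactly the two hypotheses~\eqref{d12}, so Proposition~\ref{prop:orthogonality} immediately yields that $D$ and $\widetilde D$ are biorthogonal self-duality functions in $\mathsf{L}^2(\mu_p)$, and hence (since $\mu_p$ and $w_{p,k}$ differ only by the normalization $(1-p)^{2k}$) biorthogonal with respect to the stationary measure of the process.

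The next task is to compute the two scalar products in closed form. For $D(x,n)$ the two indicator functions truncate the sum to $0\leq z\leq x\wedge n$; writing the falling factorials in Pochhammer form via $x!/(x-z)!=(-1)^z(-x)_z$ and $\Gamma(2k)/\Gamma(2k+z)=1/(2k)_z$, the factors $(-1)^{2z}$ cancel and one reads off $D(x,n)={}_2F_1(-x,-n;2k;1/q)$. The computation of $\widetilde D(x,n)$ is the technical heart of the proof: now the summation ranges over $z\geq x\vee n$, and (without loss of generality assuming $x\geq n$) I reindex $z=x+m$ and use $(A)_{k+l}=(A)_k(A+k)_l$ to collapse it into an explicit prefactor times ${}_2F_1(x+1,2k+x;x-n+1;q)$. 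A single Pfaff transformation applied to the second numerator parameter produces the terminating series $(1-q)^{-(2k+x)}{}_2F_1(-n,2k+x;x-n+1;q/(q-1))$, and a second transformation from the Kummer group for terminating ${}_2F_1$ recasts this in the form $(1-q)^{-2k}\bigl(q/(p(1-q))\bigr)^{x+n}{}_2F_1(-x,-n;2k;1/q)$. Locating the right member of the Kummer group so that the prefactor comes out exactly as stated, with the correct sign, is the main obstacle.

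With the closed forms in hand, item~(ii) of each part is obtained by direct substitution. For SIP, at $1/q=1-1/p$ the function $D$ becomes $M(x,n;p)$ by definition, while $1-q=1/(1-p)$ and $q/(p(1-q))=-1$ turn the prefactor of $\widetilde D$ into $(1-p)^{2k}(-1)^{x+n}$; the sign $(-1)^{x+n}$ is removed by Lemma~\ref{const}, since at the level of the product duality $(-1)^{\sum_i x_i}$ is conserved under the ${\rm SIP}$ dynamics. Hence $\widetilde D=(1-p)^{2k}D$ and the biorthogonality of Proposition~\ref{prop:orthogonality} collapses to the classical Meixner orthogonality relation. The SEP specialization at $1/q=1-1/p$ and the IRW specialization at $q=-p$ are verified by the same arithmetic on the analogous hypergeometric prefactors, with no further hypergeometric identity required in the IRW case.
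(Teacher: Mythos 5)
Your overall route is exactly the paper's: feed $d_1=D^{\rm cl}_{-q}$, $d_2=D^{\rm cl}_{-p}$, $\mu_1=\mu_q$, $\mu_2=\mu_p$ into Proposition~\ref{prop:orthogonality} via Lemma~\ref{lem:biorthogonality}, reduce $D$ to a terminating ${}_2F_1$, reindex the infinite sum defining $\widetilde D$ and push it through ${}_2F_1$ transformations, then specialize the parameters. The one genuine problem is the sign of the prefactor of $\widetilde D$. A direct check at $x=1$, $n=0$ gives $\widetilde D(1,0)=-\tfrac{q}{p}(1-q)^{-2k-1}$, so the correct closed form is $\widetilde D(x,n)=\bigl(-\tfrac{q}{p(1-q)}\bigr)^{n+x}(1-q)^{-2k}\,{}_2F_1\bigl(-x,-n;2k;\tfrac1q\bigr)$: the minus sign belongs \emph{inside} the power. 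This is what the chain of transformations (2.2.6), (2.3.14), (2.2.6) of \cite{KLS} used in the paper actually produces; the displayed statement of the proposition drops the sign (a typo), and you have inherited it by fitting your Kummer-group bookkeeping to the stated formula rather than to the computation. With the correct sign, at $\tfrac1q=1-\tfrac1p$ one has $-q/(p(1-q))=1$, so $\widetilde D=(1-p)^{2k}D$ exactly and no further argument is needed.

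The patch you propose for the spurious $(-1)^{x+n}$ does not work. Lemma~\ref{const} tells you that $(-1)^{x+n}\widetilde D$ is again a self-duality function, but it does not allow you to replace $\widetilde D$ by an equivalent duality function \emph{inside the biorthogonality identity}: the relation $\langle D(\cdot,m),\widetilde D(\cdot,n)\rangle_{\mu_p}=\delta_{m,n}/\mu_p(m)$ from Proposition~\ref{prop:orthogonality} is an identity for the specific function produced by the scalar-product construction, and its left-hand side changes if $\widetilde D$ is multiplied by $(-1)^{x+n}$. If $\widetilde D$ really equalled $(1-p)^{2k}(-1)^{x+n}D$, substitution would yield $\sum_x(-1)^xD(x,m)D(x,n)\mu_p(x)=c\,\delta_{m,n}$, i.e.\ orthogonality with respect to the signed weight $(-1)^x\mu_p(x)$, which is not the Meixner orthogonality relation in $\mathsf L^2(w_{p,k})$. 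So the concluding step of your item~(ii) fails as written; the repair is simply to carry the sign correctly through the hypergeometric reduction, after which the rest of your argument (including the SEP and IRW cases) goes through as in the paper.
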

\begin{proof}
As always we show the proof for the first item only.
Now we apply Proposition \ref{prop:orthogonality} with
\begin{gather*}
d_1(x,n) = D^{\rm cl}_{-q}(x,n)\qquad \text{and} \qquad d_2(x,n) = D^{\rm cl}_{-p}(x,n)
\end{gather*}
then, from Lemma \eqref{lem:biorthogonality}, the conditions \eqref{d12} are satisfied for $\mu_1=\mu_q$ and $\mu_2=\mu_p$.
We have
\begin{gather*}
D(x,n)=\big\langle D^{\rm cl}_{-q}(x,\cdot), D^{\rm cl}_{-q}(n,\cdot) \big\rangle_{q} = \rFs{2}{1}{-x,-n}{2k}{\frac{1}{q}} = M\left( x,n; \frac{q}{q-1} \right)
\end{gather*}
and
\begin{align*}
\widetilde D(x,n) &= \big\langle D^{\rm cl}_{-p}(\cdot,x), D^{\rm cl}_{-p}(\cdot,n) \big\rangle_{q}
 =(-p)^{-x-n} \sum_{z=n}^\infty \frac{ (2k)_z q^z }{z!} \frac{z! }{(z-x)!(2k)_x}\frac{z!}{(z-n)!(2k)_n} \\
& = \frac{(-p)^{-n-x} q^n}{(2k)_x (2k)_n}\sum_{m=0}^\infty \frac{ (2k)_{m+n} (m+n)! }{m! (m+n-x)! } q^m \\
& = \frac{(-p)^{-n-x} q^n n!}{(2k)_x (n-x)!} \sum_{m=0}^\infty \frac{(2k+n)_m (n+1)_m }{m! (n-x+1)_m } q^m \\
& = \frac{(-p)^{-n-x} q^n n!}{(2k)_x (n-x)!} \rFs{2}{1}{2k+n, n+1}{n-x+1}{q} .
\end{align*}
By applying the following ${}_2F_1$-transformations \cite[equations~(2.2.6), (2.3.14), (2.2.6)]{KLS}, we obtain
\begin{align*}
\rFs{2}{1}{2k+n, n+1}{n-x+1}{q} & = (1-q)^{-n-2k} \rFs{2}{1}{2k+n, -x}{n-x+1}{\frac{q}{q-1}} \\
& = (1-q)^{-n-2k} \frac{(2k)_x }{(-n)_x} \rFs{2}{1}{-x, 2k+n}{2k}{\frac{1}{1-q}} \\
& = (1-q)^{-n-x-2k} (-q)^x \frac{(2k)_x }{(-n)_x} \rFs{2}{1}{-x,-n}{2k}{\frac{1}{q}}.
\end{align*}
This gives
\begin{gather*}
\widetilde D(x,n) = \left(-\frac q{p(1-q)}\right)^{n+x} (1-q)^{-2k} \rFs{2}{1}{-x,-n}{2k}{\frac{1}{q}}
\end{gather*}
then, for $\frac 1 p=1-\frac 1 q$ we get
\begin{align*}
\widetilde D(x,n) &= (1-q)^{-2k} \rFs{2}{1}{-x,-n}{2k}{\frac{1}{q}}\\
&= (1-p)^{2k} \rFs{2}{1}{-x,-n}{2k}{1-\frac{1}{p}} = (1-p)^{2k} M(x,n; p).\tag*{\qed}
\end{align*}\renewcommand{\qed}{}
\end{proof}

\begin{Remark}From the previous proposition we have that we can write the Meixner-duality function $D(x,n)=M(x,n; p)$ in two forms in terms of scalar product:
\begin{gather}\label{scapro1}
D(x,n)=\big\langle D^{\rm cl}_{\sfrac{p}{1-p}}(x,\cdot), D^{\rm cl}_{\sfrac{p}{1-p}}(n,\cdot) \big\rangle_{{\frac{p}{p-1}}} = (1-p)^{-2k} \big\langle D^{\rm cl}_{-p}(\cdot,x), D^{\rm cl}_{-p}(\cdot,n) \big\rangle_{{\frac{p}{p-1}}} .
\end{gather}
\end{Remark}
We now write $D^{\rm cl}_{\sfrac{p}{1-p}}(x,y)$ and $D^{\rm cl}_{-p}(y,x)$ as a symmetry acting on the cheap duality. This allows us to write both expressions for $ D(x,n) $ in~\eqref{scapro1} via two symmetries ($S_{1}$ and $S_{2}$) acting on the cheap duality.
Before doing that, we will use the following lemma to justify some equality in the computation below.

\begin{Lemma}[duality of $K^{+}$ and $K^{-}$ via the cheap duality function] The operators~$K^{+}$ and~$K^{-}$ are dual via
\begin{gather*}
D^{\rm ch}_{1}(x,y)=\frac{x!\Gamma(2k)} {\Gamma(2k+x)} \delta_{x,y}.
\end{gather*}
\end{Lemma}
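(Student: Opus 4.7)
The claim is that $K^{+}$ and $K^{-}$ form a dual pair of operators via the diagonal function $D^{\rm ch}_{1}(x,y)=\frac{x!\Gamma(2k)}{\Gamma(2k+x)}\delta_{x,y}$, so by Definition~\ref{dop} the plan is to verify the pointwise identity
\begin{gather*}
\big[K^{+} D^{\rm ch}_{1}(\cdot,y)\big](x) = \big[K^{-} D^{\rm ch}_{1}(x,\cdot)\big](y) \qquad \text{for all } x,y\in\mathbb{N}.
\end{gather*}
Because $D^{\rm ch}_{1}$ is diagonal, both sides reduce to a single term via the Kronecker delta, so no convergence issues arise and the verification is entirely algebraic.

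Using the representation \eqref{discretesu11}, the left-hand side evaluates to $(2k+x)\,D^{\rm ch}_{1}(x+1,y)$, which is nonzero only when $y=x+1$, while the right-hand side equals $y\,D^{\rm ch}_{1}(x,y-1)$, nonzero only when $x=y-1$. Both support conditions collapse to $y=x+1$, and then substituting the definition of $D^{\rm ch}_{1}$ and applying $\Gamma(2k+x+1)=(2k+x)\Gamma(2k+x)$ together with $(x+1)!=(x+1)\,x!$ reduces each side to
\begin{gather*}
\frac{(x+1)!\,\Gamma(2k)}{\Gamma(2k+x)}\,\delta_{y,x+1}.
\end{gather*}

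There is essentially no obstacle here: the lemma is a one-line verification. Conceptually, it can be viewed as an instance of Lemma~\ref{cheaptrivialduality}: the function $D^{\rm ch}_{1}(x,x)=\mu(x)$ with $\mu(x)=\frac{x!\Gamma(2k)}{\Gamma(2k+x)}$ is the inverse of the formal $p=1$ limit of the reversible weight $w_{p,k}$, and the duality of $K^{+}$ with $K^{-}$ via this diagonal is exactly the detailed-balance identity $\mu(x+1)(2k+x)=\mu(x)(x+1)$, which follows directly from the recursion for $\Gamma(2k+x)$.
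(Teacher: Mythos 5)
Your verification is correct and follows essentially the same route as the paper, which simply evaluates both sides to the common value $\frac{y!\,\Gamma(2k)}{\Gamma(2k+y-1)}$ on the support $y=x+1$; you are in fact slightly more careful in tracking the Kronecker deltas on both sides. The closing remark identifying the identity with detailed balance for the (unnormalized) $p=1$ weight is accurate and consistent with Lemma~\ref{cheaptrivialduality}, though not needed for the proof.
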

\begin{proof}This follows from
\begin{gather*}
\big( K^{+}D^{\rm ch}_{1} (\cdot, y) \big) (x) = \frac{y!\Gamma(2k)}{\Gamma(2k+y-1)} = \big( K^{-}D^{\rm ch}_{1}(x, \cdot)\big) (y) .\tag*{\qed}
\end{gather*}\renewcommand{\qed}{}
\end{proof}

As a consequence of the above relation, we have the following corollary.
\begin{Corollary}\label{cor-lemma}
The operators ${\rm e}^{K^{+}}$ and $ {\rm e}^{K^{-}} $ are also in duality via ${D_{1}^{\rm ch}}$. Moreover, we can choose parameter $\alpha$, $\beta$ on the exponentials and $\lambda$ on ${D_{1}^{\rm ch}}$ such that the relation
\begin{gather*}
\Big({\rm e}^{\alpha K^{-}} D^{\rm ch}_{\frac{1}{\lambda}}(\cdot ,y) \Big) (x)= \Big({\rm e}^{\beta K^{+}} D_{\frac{1}{\lambda}}^{\rm ch}(x,\cdot) \Big) (y)
\end{gather*}
is always true for any $\alpha$, $\beta$ and $\lambda \in \mathbb{R}$ that satisfy $\alpha= \lambda \beta$.
\end{Corollary}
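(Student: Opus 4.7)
The plan is to lift the single-operator duality supplied by the preceding Lemma to the exponentials, first by establishing duality of all integer powers of $K^{\pm}$ and then by summing the Taylor series. The rescaled identity (second assertion) will be handled by the same scheme, with the condition $\alpha=\lambda\beta$ emerging from keeping track of the factors of $\lambda$ coming from the diagonal function $D^{\rm ch}_{1/\lambda}$.

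First I would prove by induction on $n$ that
\begin{gather*}
\big((K^{-})^n D^{\rm ch}_1(\cdot,y)\big)(x) = \big((K^{+})^n D^{\rm ch}_1(x,\cdot)\big)(y)
\end{gather*}
for all $n\geq 0$. The base case $n=0$ is trivial, the case $n=1$ is a one-line computation using that $D^{\rm ch}_1$ is diagonal (dual companion of the preceding Lemma), and the inductive step is obtained by applying the $n=1$ relation once more to the function obtained at step $n-1$. Dividing by $n!$ and summing in $n$ (a finite sum, since $D^{\rm ch}_1(\cdot,y)$ is finitely supported, so powers of the raising/lowering operators vanish eventually) gives the first claim ${\rm e}^{K^{-}}$ on $x$ equals ${\rm e}^{K^{+}}$ on $y$ via $D^{\rm ch}_1$.

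For the parametrized statement I would short-cut the iterated-duality argument and compute both sides in closed form. A straightforward induction yields
\begin{gather*}
(K^{-})^n h(x) = \frac{x!}{(x-n)!}\, h(x-n), \qquad (K^{+})^n g(y) = \frac{\Gamma(2k+y+n)}{\Gamma(2k+y)}\, g(y+n).
\end{gather*}
Substituting $D^{\rm ch}_{1/\lambda}(x,y)=\lambda^{y}\,\frac{y!\,\Gamma(2k)}{\Gamma(2k+y)}\,\delta_{x,y}$ into the Taylor series of ${\rm e}^{\alpha K^{-}}$ and ${\rm e}^{\beta K^{+}}$ respectively, the Kronecker delta collapses each sum to the single term $n=x-y$ (both sides vanish when $x<y$). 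Cancelling the common Gamma and factorial ratios leaves the prefactors $\alpha^{x-y}\lambda^{y}$ on the left and $\beta^{x-y}\lambda^{x}$ on the right. Equality for every $x\geq y$ is therefore equivalent to $\alpha^{x-y}=(\lambda\beta)^{x-y}$ for all $x-y\in\mathbb{N}$, i.e.\ exactly $\alpha=\lambda\beta$, as claimed.

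There is no real obstacle; the entire argument is bookkeeping. The only point that needs care is the tracking of $\lambda^{y}$ versus $\lambda^{x}$ on the two sides, because this asymmetric $\lambda$-dependence is precisely what forces the relation $\alpha=\lambda\beta$. The convergence of the exponential series is a non-issue here since evaluation on the diagonal function $D^{\rm ch}_{1/\lambda}$ makes all sums terminate.
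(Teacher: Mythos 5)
Your proof is correct: the power formulas $(K^{-})^{n}h(x)=\frac{x!}{(x-n)!}h(x-n)$ and $(K^{+})^{n}g(y)=\frac{\Gamma(2k+y+n)}{\Gamma(2k+y)}g(y+n)$ are right, the Kronecker delta does collapse both exponential series to the single term $n=x-y$, and the surviving prefactors $\alpha^{x-y}\lambda^{y}$ versus $\beta^{x-y}\lambda^{x}$ give exactly the condition $\alpha=\lambda\beta$. The paper states this corollary without proof, as an immediate consequence of the preceding lemma; your computation is precisely the intended verification, so nothing further is needed.
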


To make notation simpler we write $ K^{-}_{1}$ (resp.~$ K^{-}_{2} $) for the action of $K^{-} $ on the first (resp.\ second) variable and same for $K^{+} $. Let's now investigate the two symmetries associated to the self-duality function in equation~\eqref{scapro1}.
\begin{Proposition}[two ways of expressing orthogonal polynomials] Let $D$ be the self-duality function given by the two scalar products in equation~\eqref{scapro1}, then $D$ can be written as a symmetry acting on $D^{\rm ch}$. In particular, we have that
\begin{gather}\label{s1}
 D(x,n)
= {\rm e}^{K^{-}_{1}} {\rm e}^{\frac{p}{p-1}K^{+}_{1}} D^{\rm ch}_{\sfrac{p}{p-1}}(x,n)
\end{gather}for the first scalar product in~\eqref{scapro1}, and
\begin{gather}\label{s2}
 D(x,n)=(1-p)^{-2k} {\rm e}^{-pK^{+}_{1}} {\rm e}^{\frac{1}{1-p}K^{-}_{1}} D^{\rm ch}_{p(p-1)}(x, n)
\end{gather}for the second one.
\end{Proposition}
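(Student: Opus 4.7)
The plan is to translate each scalar product in~\eqref{scapro1} into a factorized symmetry acting on a cheap duality, combining two tools: the identification $D^{\rm cl}_q = {\rm e}^{K^-}D^{\rm ch}_q$ from Proposition~\ref{clalemm}, and Corollary~\ref{cor-lemma}, which converts an exponential of $K^-$ acting on one variable of $D^{\rm ch}_{1/\lambda}$ into an exponential of $K^+$ acting on the other variable whenever $\alpha=\lambda\beta$. In both cases the common strategy is: expand each $D^{\rm cl}$ as ${\rm e}^{K^-}D^{\rm ch}$, collapse the sum over $z$ using the Kronecker delta present in $D^{\rm ch}$, and apply Corollary~\ref{cor-lemma} to gather all remaining operators on the first variable~$x$.

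For~\eqref{s1} I start from $D(x,n)=\sum_{z} D^{\rm cl}_{\sfrac{p}{1-p}}(x,z)\, D^{\rm cl}_{\sfrac{p}{1-p}}(n,z)\,\mu_{\sfrac{p}{p-1}}(z)$. Writing each factor as ${\rm e}^{K^-}$ acting on the first variable of $D^{\rm ch}_{\sfrac{p}{1-p}}$, the operators ${\rm e}^{K^-_{x}}$ and ${\rm e}^{K^-_{n}}$ commute with the summation over $z$ and can be pulled outside. The remaining sum is elementary thanks to the two deltas, and a short manipulation of the powers of $p$, $1-p$ and $p-1$ coming from $\mu_{\sfrac{p}{p-1}}(x)\cdot(\sfrac{p}{1-p})^{-2x}$ gives $D^{\rm ch}_{\sfrac{p}{p-1}}(x,n)$. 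A last application of Corollary~\ref{cor-lemma}, with $\alpha=1$ and $\lambda=\sfrac{p-1}{p}$, converts ${\rm e}^{K^-_{n}}$ into ${\rm e}^{\sfrac{p}{p-1}K^+_{x}}$, which is~\eqref{s1}.

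For~\eqref{s2} I start instead from $(1-p)^{-2k}\sum_z D^{\rm cl}_{-p}(z,x)\, D^{\rm cl}_{-p}(z,n)\,\mu_{\sfrac{p}{p-1}}(z)$. Here the ${\rm e}^{K^-}$ hidden in each $D^{\rm cl}$ acts on the summation variable~$z$, so it cannot be pulled out directly. Instead, I first invoke Corollary~\ref{cor-lemma} with $\alpha=1$ and $\lambda=-\sfrac{1}{p}$ to rewrite each ${\rm e}^{K^-_{z}}$ as ${\rm e}^{-pK^+}$ acting on the second variable ($x$ or $n$). These can then be pulled outside the sum, the delta collapses the scalar product to $D^{\rm ch}_{p(p-1)}(x,n)$, and a second application of Corollary~\ref{cor-lemma} (now with $\lambda=\sfrac{1}{p(p-1)}$ and $\beta=-p$, hence $\alpha=\sfrac{1}{1-p}$) transfers ${\rm e}^{-pK^+_{n}}$ into ${\rm e}^{\sfrac{1}{1-p}K^-_{x}}$, yielding~\eqref{s2}.

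The main obstacle in both parts is purely arithmetic: matching the exponents of $p$, $1-p$ and $p-1$ that appear from $\mu_{\sfrac{p}{p-1}}(z)$, from the $q^{-y}$ factor in $D^{\rm ch}_q$, and from each invocation of Corollary~\ref{cor-lemma} with the correct value of $\lambda$ dictated by the current cheap duality. Once these parameter identifications are in place, the equalities~\eqref{s1} and~\eqref{s2} follow immediately from the manipulations above.
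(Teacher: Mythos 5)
Your argument is correct and rests on the same two ingredients as the paper's proof: the identification $D^{\rm cl}={\rm e}^{K^-}D^{\rm ch}$ from Proposition~\ref{clalemm} and the transfer rule of Corollary~\ref{cor-lemma}, and all of your parameter identifications ($\lambda=\frac{p-1}{p}$, $\beta=\frac{p}{p-1}$ for \eqref{s1}; $\lambda=-\frac1p$, $\beta=-p$ and then $\lambda=\frac{1}{p(p-1)}$, $\alpha=\frac{1}{1-p}$ for \eqref{s2}) check out, as does the reduction of the inner $z$-sum to $D^{\rm ch}_{\sfrac{p}{p-1}}$ resp.\ $D^{\rm ch}_{p(p-1)}$. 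The only organizational difference is that the paper keeps the scalar product intact and shuttles one exponential from the second factor to the first using the adjoint relations \eqref{adjoints su11} with respect to $\mu_{\sfrac{p}{p-1}}$ (so its proof reads: pull out, corollary, adjoint, corollary, then evaluate the cheap--cheap inner product), whereas you collapse the $z$-sum as early as possible and then apply Corollary~\ref{cor-lemma} directly to the resulting cheap duality; this skips the adjoint step entirely and is a mild but genuine simplification. Note that the operator orderings come out right in your version because ${\rm e}^{K^-}$ acting on $x$ commutes with anything acting on $n$, so converting the $n$-operator last places it correctly to the right of (i.e., inside) the $x$-operator, matching \eqref{s1} and \eqref{s2}.
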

\begin{proof}For the first scalar product in \eqref{scapro1} we have that, using the first item of Proposition \ref{clalemm}
\begin{gather}\label{sp}
D(x,n)= \big\langle D^{\rm cl}_{\sfrac{p}{1-p}}(x,\cdot), D^{\rm cl}_{\sfrac{p}{1-p}}(n,\cdot) \big\rangle_{{\frac{p}{p-1}}}
= \big\langle {\rm e}^{K^{-}_{1}} D^{\rm ch}_{\sfrac{p}{1-p}}(x, \cdot), {\rm e}^{K^{-}_{1}} D^{\rm ch}_{\sfrac{p}{1-p}}(n, \cdot) \big\rangle_{{\frac{p}{p-1}}} .
\end{gather}The action of ${\rm e}^{K^{-}_{1}} $ only affect the $x$ variable and it can be placed outside the scalar product, moreover by Corollary~\ref{cor-lemma}, the first scalar product in \eqref{sp} is equal to
\begin{gather*}
{\rm e}^{K^{-}_{1}} \big\langle D^{\rm ch}_{\sfrac{p}{1-p}}(x, \cdot), {\rm e}^{\frac{p}{1-p}K^{+}_{2}} D^{\rm ch}_{\sfrac{p}{1-p}}(n, \cdot) \big\rangle_{{\frac{p}{p-1}}} . \end{gather*}
The adjoint of $K^{+}$ with respect to $\mu_{\sfrac{p}{p-1}}$ is $\frac{p-1}{p}K^{-}$ and the above quantity becomes
\begin{gather*}
{\rm e}^{K^{-}_{1}} \big\langle {\rm e}^{-K^{-}_{2}} D^{\rm ch}_{\sfrac{p}{1-p}}(x, \cdot), D^{\rm ch}_{\sfrac{p}{1-p}}(n, \cdot) \big\rangle_{{\frac{p}{p-1}}}.
\end{gather*}
Using again Corollary~\ref{cor-lemma} we finally get
\begin{gather*}
{\rm e}^{K^{-}_{1}} {\rm e}^{\frac{p}{p-1}K^{+}_{1}} \big\langle D^{\rm ch}_{\sfrac{p}{1-p}}(x, \cdot), D^{\rm ch}_{\sfrac{p}{1-p}}(n, \cdot) \big\rangle_{{\frac{p}{p-1}}}
= {\rm e}^{K^{-}_{1}} {\rm e}^{\frac{p}{p-1}K^{+}_{1}} D^{\rm ch}_{\sfrac{p}{p-1}}(x,n) ,
\end{gather*}
where we computed the scalar product to get the symmetry in~\eqref{s1}.

In a similar fashion, for the second scalar product in \eqref{sp} we have that
\begin{align*}
D(x,n) & = (1-p)^{-2k} \big\langle D^{\rm cl}_{-p}(\cdot,x), D^{\rm cl}_{-p}(\cdot, n) \big\rangle_{{\frac{p}{p-1}}}\\
& = (1-p)^{-2k} \big\langle {\rm e}^{K^{-}_{2}} D^{\rm ch}_{-p}(x, \cdot), {\rm e}^{K^{-}_{2}}D^{\rm ch}_{-p}(n, \cdot) \big\rangle_{{\frac{p}{p-1}}}
\end{align*}
by Corollary~\ref{cor-lemma}. Then, by considering the adjoint of $K^{-}$ with respect to $\mu_{\sfrac{p}{p-1}}$, the above expression reads
\begin{gather*}
 (1-p)^{-2k} {\rm e}^{-pK^{+}_{1}} \big\langle {\rm e}^{\frac{p}{p-1}K^{+}_{2}} D^{\rm ch}_{-p}(x, \cdot), D^{\rm ch}_{-p}(n, \cdot) \big\rangle_{{\frac{p}{p-1}}}
\end{gather*}
and again, by Corollary~\ref{cor-lemma} we have the symmetry in~\eqref{s2}
\begin{gather*}
 (1-p)^{-2k} {\rm e}^{-pK^{+}_{1}} {\rm e}^{\frac{1}{1-p}K^{-}_{1}} \big\langle D^{\rm ch}_{-p}(x, \cdot), D^{\rm ch}_{-p}(n, \cdot) \big\rangle_{{\frac{p}{p-1}}}\\
 \qquad{} = (1-p)^{-2k} {\rm e}^{-pK^{+}_{1}} {\rm e}^{\frac{1}{1-p}K^{-}_{1}} D^{\rm ch}_{p(p-1)}(x, n) .\tag*{\qed}
\end{gather*}\renewcommand{\qed}{}
\end{proof}

\begin{Remark}[a commutation relation for the exponentials] The expression for $ D(x,n) $ in~\eqref{s2} can be written as an action on $ D^{\rm ch}_{p(p-1)}(x,n) $ as follows,
\begin{gather*}
D(x,n) = (1-p)^{-2k} {\rm e}^{-pK^{+}_{1}} {\rm e}^{\frac{1}{1-p}K^{-}_{1}} D^{\rm ch}_{p(p-1)}(x, n) = {\rm e}^{-pK^{+}_{1}} {\rm e}^{\frac{1}{1-p}K^{-}_{1}} (1-p)^{-2K^{0}_{1}} D^{\rm ch}_{\sfrac{p}{p-1}}(x,n) ,
\end{gather*}
comparing with \eqref{s1} allows us to infer the following relation
\begin{gather}\label{rose}
{\rm e}^{K^{-}} {\rm e}^{\frac{p}{p-1}K^{+}} = {\rm e}^{-pK^{+}} {\rm e}^{\frac{1}{1-p}K^{-}} (1-p)^{-2K^{0}} .
\end{gather}Relation in \eqref{rose} is found in \cite[Remark~3.2]{Rosengren} adapted to the $\mathfrak{su}(1,1)$ Lie algebra generators.
\end{Remark}
We now do some consideration about the relations among the symmetries in equations \eqref{s1} and \eqref{s2} with the one obtained in Proposition~\ref{factsymm}. In order to compare their action, we first realize that
\begin{gather*}
 D^{\rm ch}_{\sfrac{p}{p-1}}(x,n)= (p-1)^{x} D^{\rm ch}_{p} (x,y) ,
\end{gather*}
so that their action on $D^{\rm ch}_{p}$ reads
\begin{gather*}
(p-1)^k D(x,n)=S_{1}D^{\rm ch}_{p} (x,y) := {\rm e}^{K^{-}} {\rm e}^{\frac{p}{p-1}K^{+}} (p-1)^{K^{0}} D^{\rm ch}_{p} (x,y)
\end{gather*}
and
\begin{gather*}
(p-1)^k D(x,n)=S_{2}D^{\rm ch}_{p}(x,y) := {\rm e}^{-pK^{+}} {\rm e}^{\frac{1}{1-p}K^{-}} (p-1)^{-K^{0}} D^{\rm ch}_{p}(x,y) .
\end{gather*}
It is easy to verify that both $S_{1}$ and $S_{2}$ are unitary operators on $\mathsf L^2(w_{p,k})$ since the self-duality functions $ (p-1)^k D(x,n) $ and $ D^{\rm ch}_{p}(x,y) $ have the same norm. One could also check it via the generating function approach.

\begin{Remark}[equivalence of the symmetries] As a final remark we mention that the symmetry from Proposition \ref{factsymm} and $S_1$ are, as expected, the same.
If we consider their action on $ D^{\rm ch}_{p} (x,y)$, then
\begin{gather*}
\big( {\rm e}^{K^{-}} {\rm e}^{\frac{p}{p-1}K^{+}} (p-1)^{K^{0}} D^{\rm ch}_{p} (\cdot ,y)\big) (x) = (p-1)^k M(x,y;p),
\end{gather*}
while
\begin{gather*}
\big( {\rm e}^{K^{-}}(p-1)^{K^{0}}{\rm e}^{p K^{+}} D^{\rm ch}_{p} (\cdot,y) \big) (x)= (p-1)^k M(x,y;p)
\end{gather*}
inferring the commutation relation between $K^{+}$ and $ K^{0} $:
\begin{gather*}
{\rm e}^{\frac{p}{p-1}K^{+}} (p-1)^{K^{0}} = (p-1)^{K^{0}}{\rm e}^{p K^{+}} .
\end{gather*}
\end{Remark}

\appendix

\section{Appendix. Proof of Proposition \ref{prop11}} \label{APP}
We have to show that
\begin{gather*}
{\rm e}^{K^{-}}{\rm e}^{\log(p-1) K^{0}}{\rm e}^{pK^{+}} \big( D^{\rm ch}_{p} ( x, \cdot) \big) (y) = D^{\rm or}_{p}(x,y),
\end{gather*}
where
\begin{gather*}
D^{\rm ch}_{p} (x,y) =\frac{y! \Gamma(2k)}{\Gamma(2k+y)} p^{-y} \qquad \text{and}\qquad D^{\rm or}_{p}(x,y) = (p-1)^{k} \rFs{2}{1}{-x, -y}{2k }{1-\frac{1}{p}}.
\end{gather*}

We start by acting with the inner operator on the $y$ variable of $D^{\rm ch}_{p}$:
\begin{gather*}
 {\rm e}^{K^{-}}{\rm e}^{\log(p-1) K^{0}}{\rm e}^{pK^{+}} \big( D^{\rm ch}_{p} ( x, \cdot) \big) (y) \\
 \qquad{} = {\rm e}^{K^{-}}{\rm e}^{\log(p-1) K^{0}}\sum_{i=0}^{\infty} \frac{p^{i}}{i!} (K^{+}) ^{i} \frac{x! \Gamma(2k) }{\Gamma(2k+x)} (p) ^{-x} \delta_{x,y} \\
 \qquad{} = {\rm e}^{K^{-}}{\rm e}^{\log(p-1) K^{0}}\sum_{i=0}^{\infty} \frac{p^{i}}{i!} \frac{\Gamma(2k+y+i)}{\Gamma(2k+y)} \frac{x! \Gamma(2k) }{\Gamma(2k+x)} p^{.x} \delta_{x,y+i}
\end{gather*}
since the action of the $i^{\rm th}$ power of $ K^{+} $ is $(K^{+}) ^{i} f(y) = \frac{\Gamma(2k+y+i)}{\Gamma(2k+y)} f(y+i) $. The action of $ K^{0} $ is diagonal and so we have
\begin{gather*}
{\rm e}^{K^{-}}{\rm e}^{\log(p-1) K^{0}} p^{x-y} \frac{x!}{(x-y)!}\frac{\Gamma(2k)}{\Gamma(2k+y)} p^{-x} \mathbf{1}_{\lbrace x \geq y \rbrace } \\
\qquad{}={\rm e}^{K^{-}}(p-1) ^{y+k} \frac{x!}{(x-y)!}\frac{\Gamma(2k)}{\Gamma(2k+y)} \left( \frac{1}{p}\right)^{y} \mathbf{1}_{\lbrace x \geq y \rbrace} .
\end{gather*}
Finally we use the action of the $i^{\rm th}$ power of $ K^{-} $, i.e., $( K^{-}) ^{i} f(y) = \frac{y!}{(y-i)!} f(y-i) $,
\begin{gather*}
(p-1)^{k} \sum_{i=0}^{\infty} \frac{(K^{-}) ^{i}}{i!} \left( \frac{p-1}{p}\right) ^{y} \frac{x!}{(x-y)!}\frac{\Gamma(2k)}{\Gamma(2k+y)} \mathbf{1}_{\lbrace x \geq y \rbrace} \\
\qquad{} = (p-1)^{k} \sum_{i=0}^{\infty} \frac{1}{i!} \frac{y!}{(y-i)!}\left(\frac{p-1}{p} \right) ^{y-i} \frac{x!}{(x-y+i)!} \frac{\Gamma(2k)}{\Gamma(2k+y-i)} \mathbf{1}_{\lbrace x \geq y-i \rbrace} \mathbf{1}_{\lbrace i \leq y \rbrace} \\
\qquad{}= (p-1)^{k} \sum_{i=0 \vee (y-x)}^{y} \frac{1}{i!} \frac{y!}{(y-i)!}\left( \frac{p-1}{p} \right) ^{y-i} \frac{x!}{(x-y+i)!} \frac{\Gamma(2k)}{\Gamma(2k+y-i)} \\
\qquad{} = (p-1)^{k} \sum_{s=0}^{x \wedge y} \frac{x!}{(x-s)!} \frac{y!}{(y-s)!} \frac{1}{s!}\left( \frac{p-1}{p}\right) ^{s} \frac{\Gamma(2k)}{\Gamma(2k+s)} \\
\qquad{} = (p-1)^{k} M(x,y;p) = D^{\rm or}_{p} (x,y).
\end{gather*}
Where we performed the change of variable $y-i=s$ at the end. Note that, up to the constant $(p-1)^{k}$, the last sum is the closed form of Meixner polynomials of parameter~$p$ and $2k$, variable~$x$ and degree~$y$. Since the result is symmetric in~$x$ and~$y$ then the action on the $x$ variable would produce the same result.

\subsection*{Acknowledgements}

We thank the anonymous referees for their input which helped to improve the paper.

\pdfbookmark[1]{References}{ref}
\LastPageEnding

\end{document}